\theoremstyle{plain}
\newtheorem{theorem}{Theorem}[section]
\newtheorem{example}{Example}[section]
\newtheorem{lemma}[theorem]{Lemma}
\newtheorem{proposition}[theorem]{Proposition}
\newtheorem{corollary}[theorem]{Corollary}
\newtheorem{remark}[theorem]{Remark}
\newtheorem{definition}[theorem]{Definition}
\theoremstyle{definition}
\theoremstyle{remark}
\numberwithin{equation}{section}
\newcommand{\co}{\mathrm{co}\,}
\newcommand{\pl}{\mathrm{pl}}
\newcommand{\bi}{\bar\imath}
\newcommand{\bj}{\bar\jmath}
\newcommand{\bT}{T}
\newcommand{\T}{\mathcal{T}}
\newcommand{\ep}{\varepsilon}
\newcommand{\e}{\varepsilon}
\newcommand{\dd}{\displaystyle}
\newcommand{\R}{\mathbb{R}}
\newcommand{\N}{\mathbb{N}}
\newcommand{\Z}{\mathbb{Z}}
\newcommand{\F}{\mathcal{F}}
\newcommand{\di}{\textrm{dist}}
\newcommand{\ud}{\;\mathrm{d}}
\newcommand{\Om}{\Omega}
\newcommand{\supp}{\mathrm{supp}\,}
\newcommand{\AF}{\mathcal{AF}}
\newcommand{\weakly}{\rightharpoonup}           
\newcommand{\weakstar}{\stackrel{*}{\weakly}}   
\newcommand{\fla}{\stackrel{\mathrm{flat}}{\rightarrow}}
\newcommand{\flt}{\mathrm{flat}}
\newcommand{\W}{\mathbb{W}}
\newcommand{\loc}{\mathrm{loc}}
\def\disstilde{ \tilde D_\phi}
\def\diss{ D_\phi}
\newcommand{\res}{\mathop{\hbox{\vrule height 7pt width .5pt depth 0pt
\vrule height .5pt width 6pt depth 0pt}}\nolimits}
\title[Motion of discrete screw dislocations along glide directions] {Minimising movements for the motion of discrete screw dislocations along glide directions}
\author[R. Alicandro]
{R. Alicandro}
\address[Roberto Alicandro]{DIEI, Universit\`a di Cassino e del Lazio meridionale, via Di Biasio 43, 03043 Cassino (FR), Italy}
\email[R. Alicandro]{alicandr@unicas.it}
\author[L. De Luca]
{L. De Luca}
\address[Lucia De Luca]{Zentrum Mathematik - M7, Technische Universit\"at M\"unchen, Boltzmannstrasse 3, 85748 Garching, Germany}
\email[L. De Luca]{deluca@ma.tum.de}
\author[A. Garroni]
{A. Garroni}
\address[Adriana Garroni]{Dipartimento di Matematica ``Guido Castelnuovo'', Sapienza Universit\`a di Roma, P.le Aldo Moro 5, I-00185 Roma, Italy}
\email[A. Garroni]{garroni@mat.uniroma1.it}
\author[M. Ponsiglione]
{M. Ponsiglione}
\address[Marcello Ponsiglione]{Dipartimento di Matematica ``Guido Castelnuovo'', Sapienza Universit\`a di Roma, P.le Aldo Moro 5, I-00185 Roma, Italy}
\email[M. Ponsiglione]{ponsigli@mat.uniroma1.it}
\begin{document}

\maketitle

\begin{abstract}

In \cite{ADGP2} a  simple discrete scheme for the motion of screw dislocations toward low energy configurations has been proposed. 
There, a formal limit of such a scheme, as the  lattice spacing and the time step tend to zero, has been described. The limiting  dynamics agrees with the maximal dissipation criterion introduced in \cite{CG} and  predicts motion along the glide directions of the crystal. 

In this paper, we provide rigorous proofs of  the results in \cite{ADGP2}, and in particular of the passage from the discrete  to the continuous dynamics. The proofs are based on $\Gamma$-convergence techniques.  
\end{abstract}

\section*{Introduction}\label{intro}
This paper deals with variational models describing the motion of straight screw dislocations toward low energy configurations \cite{ADGP1, ADGP2, BFLM, CF,  KW}.  
Here we provide  rigorous justifications to the results announced, and described in a more mechanical language in the companion paper \cite{ADGP2}.

In a previous paper \cite{ADGP1}, we have considered  a discrete anti-plane model for elasticity in a cubic lattice, governed by periodic nearest neighbors interactions. In view of the anti-plane assumption, all the relevant quantities are defined in a cross section  of the crystal, i.e., on a square lattice. 
Following the formalism in \cite{AO}, we have introduced screw dislocations in the model, as point topological singularities of the discrete displacement field.
First, we have analysed  by means of a  $\Gamma$-convergence expansion the elastic energy induced by dislocations, as the lattice spacing $\e$ tends to zero, showing that the energy concentrates on points  which interact through the so-called renormalised energy.
Then, 
we have proposed and analysed an implicit Euler scheme for the gradient flow of the energy induced by discrete dislocations. This consists in introducing a time step $\tau$ and considering a step by step minimisation of a total energy, given by the sum of the elastic energy induced by a distribution of dislocations, and a dissipation energy spent to move the singularities. 
We have proved that
the minimising movements scheme allows dislocations to overcome the energy barriers due to  the discrete structure. Indeed, in the limit as first $\e$ and then $\tau$ tend to zero, the dynamics is driven by the gradient flow of the renormalised energy (in perfect analogy with the theory of dynamics of vortices in superconductivity \cite{BBH, SS2}). 
  
In \cite{ADGP2} we have  extended the discrete model studied in \cite{ADGP1}  to several crystal structures relevant for applications and we have  enriched the scheme for the motion of screw dislocations  by considering  new anisotropic  rate dependent dissipations, related to the specific crystalline structures.
We have shown, in the formal limit as $\e,\tau\to 0$, that the proposed scheme is able to predict motion of dislocations along the glide directions of the crystal, according with the {\it maximal energy dissipation criterion} postulated in \cite{CG}. 
The latter asserts that the dislocations move along the glide directions that maximise the scalar product with the Peach K\"ohler force $j$ which, in turn, is given by  $-\nabla W$, being $W$ the renormalised energy. 
According to this criterion the velocity field might be not uniquely determined and the formulation needs to be relaxed. The effective dynamics is then  described by a differential inclusion rather than a differential equation (as analysed in \cite{BFLM}).

In this paper, starting from a microscopic fundamental description, we give a rigorous derivation of this effective dynamics, highlighting  its gradient flow structure and providing an approximation scheme and a selection principle for the dynamics proposed in \cite{CG}.
Our results are based on the  fact that the $\Gamma$-convergence expansion developed in \cite{ADGP1} holds true for several crystalline structures and different types of interactions (see  \cite{DL}), so that the discrete elastic energy induced by the dislocations  can be asymptotically decomposed into the sum of a self energy, concentrated around each dislocation, and of the  renormalised energy, governing the interactions of the limiting singularities.  We start our analysis assuming that the discrete model under consideration exhibits this behavior (see Section~\ref{specific}).
A key argument is an improved lower bound for the discrete energy which accounts for the formation of dipoles which cluster at points that do not appear in the limiting distribution of singularities (see Proposition~\ref{corimp}).
This lower bound guarantees that the crystalline rate dependent dissipation considered in the discrete gradient flow, which in 
general is not continuous with respect to the flat convergence (see Example~\ref{example}), is instead continuous on the proposed discrete dynamics for well prepared initial conditions.
We provide a concise and  almost  self contained presentation, giving references whenever it is  needed, while for further modeling motivations and examples 
we refer the reader to \cite{ADGP2}.

\section{The discrete model}\label{modello}

In this section we introduce the discrete formalism that will be used in the paper (see \cite{AO, ADGP2,ADGP1, DL}).

\subsection{The discrete lattice}
We recall that a {\it Bravais lattice} $\Lambda_B$ in $\R^2$ is a discrete set of points in $\R^2$ generated by two  given linearly independent vectors $v_1,\,v_2$, i.e.,
\begin{equation*}
\Lambda_B:= \left\{ z_1\, v_1+z_2\,v_2, \, z_1, z_2\in \Z \right\}.
\end{equation*}
A {\it complex lattice} $\Lambda_C$ in $\R^2$ is the union of a finite number of translations of a given Bravais lattice $\Lambda_B$, i.e., 
$\Lambda_C$ is of the form
\begin{equation*}
\Lambda_C:= \bigcup_{k=1}^M (\Lambda_B + \tau_k),
\end{equation*}
where $\tau_1, \ldots, \tau_M$ are $M$ given translation vectors in $\R^2$. 
Clearly a Bravais lattice is a particular case of complex lattice (corresponding to $M=1, \, \tau_1 = 0$). In the sequel, we will denote by $\Lambda$ any complex (and in particular Bravais) lattice in $\R^2$. 

\subsection{The reference configuration}\label{boundedlattices}
Let $\Lambda$ be a complex lattice in $\R^2$ and let $\T$ be a periodic triangulation of $\Lambda$, in the sense that if $T\in\T$, then  $v_1+\bT, v_2+\bT\in\T$, where $v_1$ and $v_2$ are the generators of the Bravais lattice associated to $\Lambda$. Such a triangulation always exists. For instance, one can consider the Delaunay tessellation of $\R^2$ associated to $\Lambda$, which is $v_1$ and $v_2$ periodic, and then construct a periodic Delaunay triangulation, starting from a triangulation of a primary domain. We remark that the triangulation $\T$ is not uniquely determined; nevertheless, our analysis will not be affected by the specific choice of $\T$.
From now on, we assume that the triangles $T$ in $\T$ are closed.

Let $\Om\subset \R^2$ be an open bounded set with Lipschitz continuous boundary.
For every $\ep>0$, we denote by $\ep\T$ the family of triangles $\ep T$ with $T\in \T$, we define  the set of the $\ep$-triangular  cells in $\Omega$ as
$$
\Omega^2_{\ep,\Lambda}:=\{T\in\ep\T\,:\,T\subset\bar\Omega\}
$$
and we set $\Omega_{\ep,\Lambda}:=\cup_{T\in\Omega^2_{\ep,\Lambda}}T$.
Moreover, we set $\Omega_{\ep,\Lambda}^0:=\Omega_{\ep,\Lambda}\cap\ep\Lambda$ and 
$$
\Omega_{\ep,\Lambda}^1:=\{(i,j)\in\Omega_{\ep,\Lambda}^0\times\Omega_{\ep,\Lambda}^0\,: i, \, j \in T \text{ for some } T\in \Omega^2_{\ep,\Lambda} \}.
$$ 
Notice that if $\Lambda$ is the equilateral triangular lattice, then $\Omega_{\ep,\Lambda}^1$ is nothing but the class of nearest neighbors in $\Omega^0_{\ep,\Lambda}$.

 In the following we will extend the use of such notations to any given subset $A$ of $\R^2$.

\subsection{Discrete functions and discrete topological singularities}
We denote the class of scalar functions on $\Omega_{\ep,\Lambda}^0$ by
$$
\mathcal{AF}_{\ep,\Lambda}(\Omega):=\left\{u:\Omega_{\ep,\Lambda}^0\to\R\right\}.
$$
In order to introduce the notion of discrete topological singularity, we associate to any bond $(i,j)\in\Omega_{\ep,\Lambda}^1$ an arbitrarily oriented vector $\ell_{i,j}=\ell_{j,i}$ which coincides either with $j-i$ or with $i-j$.

Let $P:\R\to\Z$ be defined as follows
\begin{equation*}
P(t)=\mathrm{argmin}\left\{|t-s|:\,s\in\Z\right\},
\end{equation*}
with the convention that, if the argmin is not unique, then $P(t)$ is the smallest one.
Let $u\in\mathcal{AF}_{\ep,\Lambda}(\Om)$ be fixed. { The discrete plastic strain $\beta_u^{\pl}$ associated to $u$ is defined on the oriented bonds of the triangulation  by  $\beta_{u}^{\pl}(\ell_{m,n})=P(u(n)-u(m))$ if $\ell_{m,n}=n-m$ and $\beta_{u}^{\pl}(\ell_{m,n})=P(u(m)-u(n))$ if $\ell_{m,n}=m-n$.}
Given $T \in \Om_{\ep,\Lambda}^{2}$ and given  a triple $(i,j,k)$ of vertices of $T$ defining a counter-clockwise orientation of $T$, we introduce { the discrete circulation of $u$ around $T$} as  
\begin{equation}\label{alfa}
\alpha_u(T):= \ell_{i,j}\cdot \frac{j-i}{|j-i|^2} \beta^{\pl}_u(\ell_{i,j}) +  \ell_{j,k}\cdot \frac{k-j}{|k-j|^2} \beta^\pl_u(\ell_{j,k})+\ell_{k,i}\cdot \frac{i-k}{|i-k|^2} \beta^\pl_u(\ell_{k,i}).
\end{equation}
{ Notice  that the sign in front of any contribution $\beta_u^{\pl}({\ell_{i,j}})$ depends on the relative orientation of the bond $\ell_{i,j}$ and the counter-clockwise orientation of the triangle $T$.  This ensures that, whenever we sum the  circulation around  two adjacent  triangles, then the contribution on the common bond   cancels. 
}

One can easily check that $\alpha_u$ takes values in the set $\{-1,0,1\}$.  { The values $+1$ and $-1$ for $\alpha_u$  correspond to the presence of a dislocation in the triangle $T$.} Finally, we define the discrete dislocation measure $\mu(u)$ as follows
\begin{equation*}
\mu(u):= \sum_{T \in \Om_{\ep,\Lambda}^2}\alpha_u(T)\delta_{b(T)},
\end{equation*}
where $b(T)$ is the barycenter of the triangle $T$.
By its very definition, for every  subset $A$ of $\Omega$ which is union of $\e$-triangles in $\ep\T$ we have that $\mu(u)(A)$ depends only on the values of $u$ on $\partial A \cap \ep\Lambda$. 

We remark that other variants of the given notion of discrete { circulation} could be adopted. For instance, if $\Lambda$ is a Bravais  lattice, one could define the function $\alpha_u$ on  primitive cells instead of triangles (as done in \cite{ADGP1, DL}), and  the analysis developed in this paper would apply with minor notational changes.

Let $\mathcal{M}(\Omega)$ be the space of Radon measures in $\Omega$. We set 
\begin{equation*}
\begin{split}
& X(\Omega):=\left\{\mu\in\mathcal{M}(\Omega)\,:\ \mu=\sum_{i=1}^{N}d_i\delta_{x_i}\,,\,N\in\N,\,d_{i}=\pm 1, x_i\in\Omega\,, x_i\neq x_j\  \hbox{if } i\neq j\right\}, \\
& X_{\ep,\Lambda}(\Omega):=\left\{\mu\in X(\Omega) : \mu=\sum_{T\in\Omega_{\ep,\Lambda}^2} d{(T)}\,\delta_{b(T)}, \, 
d{(T)} \in \{-1,0,1\}\right\}.
\end{split}
\end{equation*}
We denote by $W^{-1,1}(\Om)$ the dual of $W^{1,\infty}_0(\Om)$, by $\|\cdot\|_{\flt}$ the dual norm in $W^{-1,1}(\Om)$, referred to as {\it flat norm}, and by   $\mu_n \fla \mu$ the flat convergence of $\mu_n$ to $\mu$.


\subsection{The energy functionals}
 Here we introduce a class of energy functionals defined on  $\AF_{\ep,\Lambda}(\Omega)$. 
Let $\{f_{\bj-\bi}\}_{(\bi,\bj)\in (\R^2)^1_{1,\Lambda}}$ be a family of non-negative, continuous, $1$-periodic interaction potentials vanishing on $\Z$, {such that
$$
f_{\bj-\bi}(t)=c_{\bj-\bi}t^2+o(t^2)
$$
for some  constants $c_{\bj-\bi}\ge 0$, for any $(\bi,\bj)\in(\R^2)^1_{1,\Lambda}$.} A prototypical example of these potentials is given by the functions 
 \begin{equation}\label{screw}
 f_{\bj-\bi}(t)=c_{\bj-\bi}\,{\rm dist}^2(t,\Z)\,.
 \end{equation}
 We refer to  \cite{ADGP2}  for a formal derivation of energy potentials of the type \eqref{screw} from  a discrete  anti-plane elasticity model. 

Throughout the paper we will assume the following 
{\it coercivity condition}: for each triangle $T\in\T$ there are at least two distinct bonds for which the interaction potential vanishes only on $\Z$. Precisely, for every $T\in\T$, denoting by $\bi_1$, $\bi_2$, $\bi_3$ its vertices, there exists a permutation $(k_1,k_2,k_3)$ of $(1,2,3)$ such that 
\begin{equation}\label{coerc}
f_{\bi_{k_1}-\bi_{k_2}}(t)>0\ \ \hbox{and}\ \ f_{\bi_{k_1}-\bi_{k_3}}(t)>0\ \quad \forall \ t\not\in \Z.
\end{equation}
 For every $\e>0$, 
the energy functionals 
$F_{\ep,\Lambda}: \mathcal{AF}_{\ep,\Lambda}(\Omega)\to \R$ are defined by

\begin{equation*}
F_{\ep,\Lambda}(u)=\sum_{(i,j)\in \Omega_{\e,\Lambda}^1} f_{\frac {j -i} \ep}(u(j)-u(i))\, .
\end{equation*}

Given $T\in\Omega_{\e,\Lambda}^2$ we  denote by $F_{\ep,\Lambda}(u,T)$  the energy accounting for the interactions between the vertices of the $\ep$-triangle $T$. Precisely, denoting by $i_1$, $i_2$, $i_3$ the  vertices of $T$, 
$$
F_{\ep,\Lambda}(u,T) = \sum_{k,l=1,2,3} f_{\frac {i_l -i_k} \ep}(u(i_l)-u(i_k)).
$$
As a straightforward consequence of the coercivity assumption we get the following lemma.

\begin{lemma}\label{energy-dipole}
There exists a constant $c_0>0$ such that
for any $u\in\AF_{\ep,\Lambda}(\Omega)$  and for any $T\in\Omega_{\ep,\Lambda}^2$
\begin{equation*}
F_{\ep,\Lambda}(u,T)\geq c_0\quad\textrm{whenever } \alpha_u(T)\neq 0.
\end{equation*}
\end{lemma}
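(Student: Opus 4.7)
The plan is to prove the contrapositive: if $F_{\ep,\Lambda}(u,T)$ is sufficiently small, then $\alpha_u(T)=0$. Label the vertices of $T$ by $i_1,i_2,i_3$ in counter-clockwise order, set $a_k:=u(i_{k+1})-u(i_k)$ (indices mod $3$) and $r_k:=a_k-P(a_k)\in[-\tfrac12,\tfrac12]$; the telescoping identity $a_1+a_2+a_3=0$ is the algebraic backbone of the argument. By the coercivity hypothesis \eqref{coerc} applied to the underlying triangle $\ep^{-1}T\in\T$, there is a vertex $\bi_{k_1}$ such that $f_{\bi_{k_1}-\bi_{k_2}}$ and $f_{\bi_{k_1}-\bi_{k_3}}$ are continuous, $1$-periodic and strictly positive on $\R\setminus\Z$. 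Consequently, both are bounded below by some $\delta_T>0$ on $\{t\in\R:\mathrm{dist}(t,\Z)\ge 1/6\}$. Since they appear among the summands of $F_{\ep,\Lambda}(u,T)$ (evaluated at $-b_1$ and $-b_2$), the assumption $F_{\ep,\Lambda}(u,T)<\delta_T$ forces the ``good'' bond differences $b_1:=u(\ep\bi_{k_2})-u(\ep\bi_{k_1})$ and $b_2:=u(\ep\bi_{k_3})-u(\ep\bi_{k_1})$ to satisfy $\mathrm{dist}(b_j,\Z)<1/6$ for $j=1,2$.

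The next step is to transfer this control to the third, possibly non-coercive bond. Since the difference across the remaining edge is $\pm(b_2-b_1)$, the triangle inequality yields $\mathrm{dist}(b_2-b_1,\Z)<1/3$; in particular none of the three bond differences is a half-integer. On such values $P(-t)=-P(t)$ holds unambiguously, which makes formula \eqref{alfa} independent of the chosen orientations $\ell_{i,j}$ and reduces it to the signed CCW sum
$$
\alpha_u(T)=P(a_1)+P(a_2)+P(a_3)=-\sum_{k=1}^3 r_k.
$$
Two of the $|r_k|$'s are $<1/6$ (those associated with the coercive bonds) and the third is $<1/3$, so $|\sum_k r_k|<2/3<1$; as $\alpha_u(T)\in\Z$ this forces $\alpha_u(T)=0$, contradicting the hypothesis. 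Hence $F_{\ep,\Lambda}(u,T)\ge\delta_T$.

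The proof is closed by compactness: the $v_1,v_2$-periodicity of $\T$ together with the translation invariance of the family $\{f_{\bj-\bi}\}$ implies that the triangles of $\T$ fall into finitely many congruence classes modulo $\Lambda_B$, so $c_0:=\min_T\delta_T>0$ is admissible. The main obstacle is precisely the algebraic bootstrap from the two coercive bonds to the third: without it, the non-coercive bond could carry a half-integer difference, in which case the value of $\alpha_u(T)$ would genuinely depend on the choice of $\ell_{i,j}$ in \eqref{alfa}. The quantitative threshold $1/6$ is chosen so that after the triangle-inequality step no bond can be half-integer, which is exactly what removes this sensitivity and yields the clean CCW identification of $\alpha_u(T)$ with $-\sum_k r_k$.
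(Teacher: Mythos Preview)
Your proof is correct and is essentially the contrapositive of the paper's argument: the paper shows directly that $\alpha_u(T)\neq 0$ forces $\sum_k\mathrm{dist}(a_k,\Z)\ge 1$, then uses pigeonhole (each term is $\le 1/2$, so at least two are $\ge 1/4$) together with coercivity \eqref{coerc} to obtain the energy lower bound. Both arguments rest on the same telescoping identity $\alpha_u(T)=-\sum_k r_k$ and the triangle inequality; you merely run it in the other direction and are more explicit about the half-integer/orientation issue and about the periodicity of $\T$ that makes $c_0$ uniform in $T$.
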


\begin{proof}
Let $(i_1,i_2,i_3)$ denote a counter-clockwise  oriented  triple of  vertices of $T$.
By the very definition  \eqref{alfa} of $\alpha_u$ and  by the triangular inequality, we have that if $ \alpha_u(T)\neq 0$, then
\begin{equation}\label{coercivo}
\sum_{k,l=1,2,3}\di(u(i_l)-u(i_k),\Z)\ge |\alpha_u(T)|= 1.
\end{equation}
Since
$\di(\cdot,\Z)\le\frac 1 2$, it follows that at least two addenda of the sum in \eqref{coercivo} are not smaller than $\textstyle \frac 1 4$ and hence the claim follows by the coercivity assumption \eqref{coerc}.
\end{proof}

It is convenient to express the energy in terms of the dislocation measure. More precisely, given $\mu \in X_{\e,\Lambda}(\Om)$, we set 
\begin{equation*}
\F_{\ep,\Lambda}(\mu):= \inf \{F_{\e,\Lambda}(u)\,: \, u\in \mathcal{AF}_{\ep,\Lambda}(\Omega),\, \mu(u) = \mu \}.
\end{equation*}

In what follows, we also need  a localised version of the energy functionals above: for any  subset $A$ of $\Omega$, we set
\begin{eqnarray*}
&&F_{\ep,\Lambda}(u,A):=\sum_{(i,j)\in A_{\e,\Lambda}^1} f_{\frac {j -i} \ep}(u(j)-u(i)),\\
&&\F_{\ep,\Lambda}(\mu,A):= \inf \{F_{\e,\Lambda}(u,A)\,: \, u\in \mathcal{AF}_{\ep,\Lambda}(A),\, \mu(u) = \mu \}.
\end{eqnarray*}
\section{Renormalised energy and $\Gamma$-convergence assumption} \label{specific}

This section is devoted to the statement of the $\Gamma$-convergence assumption for the functionals $\F_{\ep,\Lambda}$. To this purpose, we first define the renormalised energy in our setting.

The  (isotropic) renormalised energy (see formula \eqref{WI} below) has been introduced in \cite{BBH} (see also \cite{SS2}) as the interaction energy between vortices in the Ginzburg-Landau framework. In \cite{ADGP1} it has been shown  that such a renormalised energy also  governs  the interactions between screw dislocations in isotropic anti-plane elasticity.
{ However, the continuous counterpart of the discrete energy $\F_{\ep,\Lambda}$  is in general anisotropic as well as the corresponding renormalised energy.
In fact, in absence of defects (i.e., for $\mu(u)=0$) and under suitable assumptions on the potentials $f_{\bj-\bi}$, the discrete elastic energy 
$\Gamma$-converges 
(see \cite{AC})    to a continuous energy of the form
\begin{equation*}
\int_\Omega Q\nabla u\nabla u\ud x\qquad \forall u\in H^1(\Omega),
\end{equation*}
being $Q$ a symmetric positive definite matrix determined by $f_{\bj-\bi}$. As a consequence the corresponding renormalised energy can be formally computed by means of a change of variable.}

\subsection{Renormalised energy.} Let $U$ be an open bounded subset of $\R^2$ with Lipschitz continuous boundary and let $\nu:= \sum_{i=1}^M d_i \delta_{y_i}$, with $M\in\N$, $d_i\in\{-1,+1\}$, $y_i\in U$ and $y_i\neq y_j$ for $i\neq j$. Let $R_{\nu,U}:U\to\R$ be the solution of
\begin{equation}\label{Rf}
\left\{\begin{array}{ll}
\Delta R_{\nu,U}(y)=0&\textrm{ in } U,\\
R_{\nu,U}(y)=-\sum_{i=1}^M d_i\log|y-y_i|&\textrm{ on }\partial U.
\end{array}\right.
\end{equation} 
The (isotropic) renormalised energy  (see \cite{BBH}) is defined by
\begin{equation}\label{WI}
\mathscr{W}_{U}(\nu):=-{\pi}\sum_{i\neq j}d_i\,d_j\log|y_i-y_j|-{\pi}\sum_{i=1}^M d_i R_{\nu,U}(y_i).
\end{equation}
We set $\W_{I,U}(\nu):=\frac{1}{2\pi^2}\mathscr{W}_U(\nu)$ and 
for any open subset $\Om'\subseteq\Om$ with Lipschitz continuous boundary and for any $\mu=\sum_{i=1}^Md_i\delta_{x_i}\in X(\Om)$ with $\supp\mu\subset\Omega'$ we set
\begin{equation}\label{WQ}
\W_{Q,\Omega'}(\mu):=\lambda_{Q}\W_{I,\tilde Q^{-\frac 1 2}(\Omega')}(\tilde Q^{-\frac 1 2}_\sharp \mu),
\end{equation}
where 
$$
\lambda_{Q}:=\sqrt{\det Q}, \quad \quad \tilde Q:=\frac{Q}{\sqrt{\det Q}},
$$
and
$\tilde Q^{-\frac 1 2}_\sharp(\mu)$  is the push-forward of $\mu$ through $\tilde Q^{-\frac 1 2}$ defined by 
$$
\tilde Q^{-\frac 1 2}_\sharp\mu:=\sum_{i=1}^Md_i\delta_{\tilde Q^{-\frac 1 2} x_i}.
$$
To ease the notations, we set $\W_{Q}(\mu):=\W_{Q,\Omega}(\mu)$.

Throughout the paper we will assume that the  following $\Gamma$-convergence result holds true.

{\bf $\Gamma$-convergence assumption:} There exist a symmetric positive matrix $Q$ and a constant $\gamma >0$ such that the following holds.

\begin{itemize}
\item[(i)] (Compactness) Let $M\in\N$ and let $\{\mu_\ep\}\subset X_{\ep,\Lambda}(\Omega)$ be a sequence satisfying $\F_{\ep,\Lambda}(\mu_\ep)-M\frac{\lambda_{Q}}{2\pi}|\log\ep|\le C$. 
Then, up to a subsequence, $\mu_\ep\fla\mu$ for some $\mu=\sum_{i=1}^N d_i\delta_{x_i}$ with $d_i\in\Z\setminus\left\{0\right\}$, $x_i\in\Omega$, $x_i\neq x_j$ for $i\neq j$  and $\sum_{i}|d_i|\le M$. Moreover, if $\sum_{i}|d_i|=M$, then $N=M$, i.e., $|d_i|=1$ for any $i$. 
\item[(ii)] ($\Gamma$-$\liminf$ inequality)  Let $\{\mu_\ep\}\subset X_{\ep,\Lambda}(\Omega)$ be  such that $\mu_\ep\fla\mu\in X(\Omega)$. Then,
\begin{equation}\label{liminfgamma1T}
\liminf_{\ep\to 0}\F_{\ep,\Lambda}(\mu_\ep)-|\mu|(\Omega) \frac{\lambda_{Q}}{2\pi} |\log\ep|\ge \W_{Q}(\mu)+|\mu|(\Omega)\,\gamma.
\end{equation}\item[(iii)] ($\Gamma$-$\limsup$ inequality) Given $\mu\in X(\Omega)$, there exists $\{\mu_\ep\}\subset X_{\ep,\Lambda}(\Omega)$ with $\mu_\ep\fla\mu$ such that
\begin{equation}\label{gammalimsup}
\F_{\ep,\Lambda}(\mu_\ep)-|\mu|(\Omega) \frac{\lambda_{Q}}{2\pi} |\log\ep|\to\W_{Q}(\mu)+|\mu|(\Omega)\gamma.
\end{equation}
\end{itemize}

In \cite{ADGP1, DL} it has been shown that,  under suitable conditions on the potentials $f_{\bj-\bi}$, the $\Gamma$-convergence assumption above  holds true, and $Q$ and $\gamma$ are explicitly determined by $f_{\bi-\bj}$ and $\Lambda$. In particular, if $\Lambda$ is the square lattice and $f_{\bi-\bj}(t)=\di^2(t,\Z)$, then $Q=I$ and the corresponding renormalised energy is  given by $\mathbb{W}_{I,\Omega}(\mu)$.

\begin{remark}
\rm{ Note that if $\{\mu_\ep\}\subset X_{\ep,\Lambda}(\Omega)$ is  such that $\mu_\ep\fla\mu\in X(\Omega)$, then,  for every open subset $\Omega'\subset\Omega$ with Lipschitz continuous boundary and  with $\supp\mu\subset\Omega'$, there holds $\mu_\ep\fla\mu\in X(\Omega')$. Therefore, by \eqref{liminfgamma1T}, it immediately follows that
\begin{equation}\label{liminfgamma1Tloc}
\liminf_{\ep\to 0}\F_{\ep,\Lambda}(\mu_\ep,\Omega')-|\mu|(\Omega) \frac{\lambda_{Q}}{2\pi} |\log\ep|\ge \W_{Q,\Omega'}(\mu)+|\mu|(\Omega)\gamma.
\end{equation}
}
\end{remark}

\bigskip

The rest of this section is devoted to the proof of some properties which are consequences of the $\Gamma$-convergence assumption and that will be useful in the sequel.

\begin{lemma}\label{ContRen}
Let $U$ be an open bounded subset of $\R^2$ with Lipschitz continuous boundary,  $g\in W^{1,\infty}(U)$, and $\xi_1,\ldots,\xi_N\in \overline{U}$, with $N\in \N$. For any $\rho\geq 0$,  set $U_\rho:= U\setminus \cup_{i=1}^N B_\rho(\xi_i)$ (in particular, $U_0=U$);  let $R_\rho$ be the harmonic function in $U_\rho$ satisfying $R_\rho-g\in H^1_0(U_\rho)$.
Then, $R_\rho$ converges to $R_0$ as $\rho\to 0$ locally uniformly in $U\setminus\cup_{i=1}^N\{\xi_i\}$.
\end{lemma}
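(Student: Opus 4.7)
The plan is to reformulate $R_\rho$ via Dirichlet minimization, extend it across the removed balls to obtain a family uniformly bounded in $H^1(U)$, extract a weak limit, identify this limit with $R_0$ through a capacity argument, and finally upgrade weak convergence to local uniform convergence on $U\setminus\{\xi_1,\dots,\xi_N\}$ via interior regularity for harmonic functions.

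First, I would note that $R_\rho$ is the unique minimizer of $v\mapsto\int_{U_\rho}|\nabla v|^2\,\ud x$ among $v$ with $v-g\in H^1_0(U_\rho)$. Testing with $g$ itself gives $\|\nabla R_\rho\|_{L^2(U_\rho)}\le\|\nabla g\|_{L^\infty(U)}|U|^{1/2}$. I would then define $\bar R_\rho\in H^1(U)$ by setting $\bar R_\rho=R_\rho$ on $U_\rho$ and $\bar R_\rho=g$ on $\cup_i B_\rho(\xi_i)\cap U$; the traces match on $\partial B_\rho(\xi_i)\cap U$ since $R_\rho=g$ there by the boundary condition. Then $\bar R_\rho-g\in H^1_0(U)$, and the Poincar\'e inequality on $U$ combined with the gradient bound yields that $\{\bar R_\rho\}$ is uniformly bounded in $H^1(U)$.

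Next, I would extract a weakly convergent subsequence $\bar R_\rho\weakly R^*$ in $H^1(U)$, with $R^*-g\in H^1_0(U)$ preserved in the limit. For any $\phi\in C^\infty_c(U\setminus\{\xi_1,\dots,\xi_N\})$ and $\rho$ small enough that $\supp\phi\subset U_\rho$, the harmonicity of $R_\rho$ and the choice of extension give $\int_U\nabla\bar R_\rho\cdot\nabla\phi\,\ud x=0$; passing to the weak limit yields $\int_U\nabla R^*\cdot\nabla\phi\,\ud x=0$. Since finite sets in $\R^2$ have zero $H^1$-capacity, $C^\infty_c(U\setminus\{\xi_i\})$ is dense in $H^1_0(U)$, so the weak equation extends to all test functions in $H^1_0(U)$, and $R^*$ is harmonic on $U$. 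Uniqueness of the Dirichlet problem forces $R^*=R_0$, and since the limit is uniquely determined, the entire family $\{\bar R_\rho\}$ converges weakly to $R_0$ in $H^1(U)$.

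Finally, fix a compact $K\subset U\setminus\{\xi_1,\dots,\xi_N\}$ and choose an open $V$ with $K\Subset V$ at positive distance from each $\xi_i$. For $\rho$ sufficiently small $V\subset U_\rho$ and $R_\rho$ is harmonic on $V$, so interior estimates give $\|R_\rho\|_{C^1(K)}\le C_K\|R_\rho\|_{L^2(V)}\le C'_K$ uniformly in $\rho$. Arzel\`a--Ascoli yields uniform convergence on $K$ along a subsequence, and since the weak $H^1$ limit is $R_0$ the uniform limit must also equal $R_0$; uniqueness of the limit then promotes this to convergence of the full family uniformly on $K$. The main obstacle I anticipate is the removable-singularity step, concluding that a function in $H^1(U)$ which is weakly harmonic on $U\setminus\{\xi_i\}$ is in fact harmonic on $U$; this rests on the two-dimensional capacity fact that finite point sets are $H^1$-negligible. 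A secondary subtlety is that some $\xi_i$ may lie on $\partial U$, so that $B_\rho(\xi_i)\cap U$ is only a partial ball, but both the variational formulation and the extension by $g$ remain valid since the boundary condition $R_\rho=g$ holds on $\partial B_\rho(\xi_i)\cap U$ by construction.
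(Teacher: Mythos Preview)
Your proof is correct, but follows a genuinely different route from the paper. The paper works directly with the difference $\psi_\rho := R_\rho - R_0$, observes that it minimises the Dirichlet integral on $U_\rho$ subject to the boundary datum $g-R_0$, and builds an explicit competitor $(g-R_0)\Phi_\rho$ using the logarithmic capacitary potential $\Phi_\rho$ of $B_\rho(\xi)$ in a large ball; since $\Phi_\rho\to 0$ in $H^1$, the competitor's energy tends to zero, forcing $\int_{U_\rho}|\nabla\psi_\rho|^2\to 0$ and hence, by Poincar\'e and harmonicity, $\psi_\rho\to 0$ locally uniformly. You instead run a compactness argument: extend $R_\rho$ by $g$ across the removed balls to get a bounded family in $H^1(U)$, pass to a weak limit, identify it with $R_0$ via a removable-singularity step, and upgrade to locally uniform convergence through interior estimates and Arzel\`a--Ascoli. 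Both arguments ultimately hinge on the same two-dimensional fact that points have zero $H^1$-capacity---the paper uses it through $\Phi_\rho\to 0$ in $H^1$, you through the density of $C^\infty_c(U\setminus\{\xi_i\})$ in $H^1_0(U)$. The paper's construction is more direct and yields strong $H^1_{\loc}$ convergence of $R_\rho-R_0$ without any subsequence extraction; your approach is a cleaner compactness framework that would transfer more readily to settings where an explicit competitor is not at hand.
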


\begin{proof}
We first notice that $R_0\in C^\infty(U)\cap C(\overline U)$, since  $R_0$ is harmonic in $U$  and $\partial U$ is Lipschitz continuous.

Set $\psi_\rho:=R_\rho-R_0$. Trivially, $\psi_\rho$ solves the following minimisation problem
\begin{equation}\label{psirho}
\min\left\{ \int_{U_\rho}|\nabla \psi|^2 \ud x\,:\ \psi-(g-R_0)\in H^1_0(U_\rho) \right\}\,.
\end{equation}
By  linearity it is enough to consider the case $N=1$, i.e., with a single point $\xi_1=\xi$.

Set $D:={\rm diam} (U)$ and let $\Phi_\rho$ be the capacitary potential of $B_\rho(\xi)$ in $B_D(\xi)$, namely, $\Phi_\rho(x):=\frac{\log |x-\xi|-\log D}{\log \rho-\log D}$. As $\rho\to 0$, we have that $\Phi_\rho\to 0$ in $H^1(B_D(\xi))$ and also pointwise in $B_D(\xi)\setminus \{\xi\}$. Set $\hat\psi_\rho:=(g-R_0)\Phi_\rho$.
 Since $0\leq \Phi_\rho(x)\leq 1$, with $\Phi_\rho=1$ on $\partial B_\rho(\xi)$, and $g-R_0\in H^1_0(U)\cap L^\infty(U)$, it is immediate to check that $\hat\psi_\rho\to 0$ in $H^1(U)$, and $\hat\psi_\rho-(g-R_0)\in H^1_0(U_\rho)$. It follows that $\hat\psi_\rho$ is a competitor for the problem \eqref{psirho} and hence  
\begin{equation}\label{convgrad}
\int_{U_\rho}|\nabla\psi_\rho|^2\ud x\to 0.
\end{equation}
Moreover, since $\psi_\rho =0$ on $\partial U\setminus B_\rho(\xi)$,  \eqref{convgrad} combined with Poincar\'e inequality implies that $\psi_\rho\to 0$ in $H^1_{\loc}(U\setminus\{\xi\})$ as $\rho\to 0$.  Since the functions $\psi_\rho$ are harmonic, they also converge locally uniformly to zero on $U\setminus\{\xi\}$,  and hence the claim follows by the very definition of $\psi_\rho$.
\end{proof}

For any $\nu=\sum_{i=1}^K z_i \delta_{\xi_i}$ with $K\in\N$, $z_i\in \Z\setminus \{0\}$, $\xi_i\in \overline\Om$ and for any $r>0$, we set 
\begin{equation*} 
\Om_r(\nu):= \Om \setminus \cup_{i=1}^K B_{r}(\xi_i). 
\end{equation*}
Notice that Lemma \ref{ContRen} holds true also if we replace the set $U_\rho$ with $\tilde U_\rho:= U\setminus \cup_{i=1}^N \{\xi_i + \rho E\}$, where E is a fixed set with Lipschitz continuous boundary. 
Therefore, recalling \eqref{Rf}, \eqref{WI} and  \eqref{WQ} by means of a change of variable we deduce the following Corollary.

\begin{corollary}\label{cor}
Let  $\nu=\sum_{i=1}^N z_i \delta_{\xi_i}$ with $N\in\N$, $z_i\in \Z\setminus \{0\}$, $\xi_i\in \overline\Om$ and let $\mu\in X(\Om)$ be such that $\supp \mu \cap\supp \nu=\emptyset$.
Then,  $\W_{Q}(\mu, \Om_{\rho}(\nu)) \to  \W_{Q}(\mu,\Om)$ as $\rho\to 0$.
\end{corollary}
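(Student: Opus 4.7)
The plan is to reduce to the isotropic case via the change of variables in \eqref{WQ} and then apply Lemma~\ref{ContRen}, in the ellipsoidal version granted by the remark preceding the statement. Set
\[
U:=\tilde Q^{-\frac12}(\Om),\quad y_i:=\tilde Q^{-\frac12}x_i,\quad \eta_j:=\tilde Q^{-\frac12}\xi_j,\quad E:=\tilde Q^{-\frac12}(B_1(0)),
\]
and write $\tilde\mu:=\tilde Q^{-\frac12}_\sharp\mu=\sum_{i=1}^M d_i\delta_{y_i}$. By linearity of $\tilde Q^{-\frac12}$, the image of $\Om_\rho(\nu)$ is exactly $U_\rho:=U\setminus\bigcup_{j=1}^N(\eta_j+\rho E)$, which is one of the domains covered by the remark. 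Hence $\W_{Q,\Om_\rho(\nu)}(\mu)=\lambda_Q\W_{I,U_\rho}(\tilde\mu)$ and it suffices to prove $\W_{I,U_\rho}(\tilde\mu)\to\W_{I,U}(\tilde\mu)$.

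Inspecting \eqref{WI}, the first (double) sum is independent of the domain, so the only $\rho$-dependence comes from the terms $R_{\tilde\mu,U_\rho}(y_i)$. The task then reduces to showing
\[
R_{\tilde\mu,U_\rho}(y_i)\longrightarrow R_{\tilde\mu,U}(y_i)\qquad\text{for every }i=1,\dots,M.
\]
Here the assumption $\supp\mu\cap\supp\nu=\emptyset$ is crucial: the points $y_i$ are separated from the $\eta_j$, so they stay in the region where Lemma~\ref{ContRen} produces locally uniform convergence of the harmonic correctors. Pick $r_0>0$ so that the closures of $B_{r_0}(y_i)$ and $B_{r_0}(\eta_j)$ are pairwise disjoint; for all $\rho$ small enough, $\eta_j+\rho E\subset B_{r_0/2}(\eta_j)$, and in particular the $y_i$ lie in the ``good'' region.

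The one technical wrinkle is that the natural boundary datum $g(y):=-\sum_{i}d_i\log|y-y_i|$ is only smooth away from the $y_i$ and thus does not belong to $W^{1,\infty}(U)$ as Lemma~\ref{ContRen} requires. This is harmless: the value of $R_{\tilde\mu,U_\rho}$ is determined only by the trace of $g$ on $\partial U_\rho$, which lies entirely in $U\setminus\bigcup_i B_{r_0/2}(y_i)$. I will therefore replace $g$ by a function $\bar g\in W^{1,\infty}(U)$ which coincides with $g$ on $U\setminus\bigcup_i B_{r_0/2}(y_i)$ and is smoothly cut off inside the $B_{r_0/2}(y_i)$ (this is a routine construction). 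Applying Lemma~\ref{ContRen} (in the ellipsoidal form) to $\bar g$ yields that the harmonic extensions $R_\rho^{\bar g}$ converge to $R_0^{\bar g}$ locally uniformly on $U\setminus\{\eta_1,\dots,\eta_N\}$; but $R_\rho^{\bar g}=R_{\tilde\mu,U_\rho}$ and $R_0^{\bar g}=R_{\tilde\mu,U}$, since the boundary data on $\partial U_\rho$ and on $\partial U$ agree with the original ones. Evaluating at the $y_i$ gives the required pointwise convergence, and assembling the pieces produces $\W_{Q,\Om_\rho(\nu)}(\mu)\to\W_{Q,\Om}(\mu)$. The main (mild) obstacle is precisely this boundary-data modification; everything else is an immediate translation through the push-forward and the definition \eqref{WI}.
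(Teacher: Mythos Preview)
Your proof is correct and follows exactly the route the paper intends: reduce to the isotropic setting via the change of variables \eqref{WQ}, and then invoke Lemma~\ref{ContRen} in the ellipsoidal form noted just before the corollary. The paper leaves the passage implicit, whereas you have usefully spelled out the one genuine detail it glosses over---that the logarithmic boundary datum must be truncated near the $y_i$ to land in $W^{1,\infty}(U)$ before Lemma~\ref{ContRen} can be applied.
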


The following proposition provides an improved lower bound for the energy of a sequence $\mu_\ep$ converging flat to $\mu$. This lower bound accounts for the presence of dipoles of $\mu_\ep$ which do not cluster in the support of $\mu$.

\begin{proposition}\label{corimp}
Let $\{\mu_\ep\}\subset X_{\ep,\Lambda}(\Omega)$ be  such that $\mu_\ep\fla\mu\in X(\Omega)$. Assume that
\begin{eqnarray}\label{assu}
\F_{\ep,\Lambda}(\mu_\ep)-M\frac{\lambda_{Q}}{2\pi}|\log\ep|\leq C\,,
\end{eqnarray}
where $M= |\mu|(\Om)$. Then there exist $N,L\in\N$ and a measure $\nu=\sum_{j=1}^N 2\delta_{y_j}+\sum_{k=1}^L\delta_{z_k}$, with $y_j\in\Om\setminus\supp\mu$ and $z_k\in\partial\Omega$, not necessarily distinct, such that, up to a subsequence
\begin{equation}
|\mu_\ep|\res{A}\weakstar \nu\quad\mbox{in } A,
\end{equation}
where $A:=\R^2\setminus\supp\mu$. 
Moreover,
\begin{equation}\label{liminfgamma2T}
\liminf_{\ep\to 0}\F_{\ep,\Lambda}(\mu_\ep)-M\frac{\lambda_{Q}}{2\pi}|\log\ep|\ge \W_{Q}(\mu)+M\gamma +c_0\nu(A),
\end{equation}
where $c_0$ is given by Lemma~\ref{energy-dipole}.
In particular, if $\{\mu_\ep\}$ satisfies \eqref{gammalimsup}, then $\nu= 0$.
\end{proposition}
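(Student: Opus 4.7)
My plan has three stages: extract a limit measure $\nu$ by compactness, identify its atomic structure by a parity argument, and prove the refined lower bound (from which the final assertion is immediate). For the extraction, I would first show that $|\mu_\ep|$ is uniformly bounded on every compact $K\subset A$. Fix such a $K$ and pick an open set $\Omega'\subset\Omega$ with Lipschitz boundary satisfying $\supp\mu\subset\Omega'$ and $\overline{\Omega'}\cap K=\emptyset$. The localised $\Gamma$-$\liminf$ \eqref{liminfgamma1Tloc} gives $\F_{\ep,\Lambda}(\mu_\ep,\Omega')\ge M\frac{\lambda_Q}{2\pi}|\log\ep|+\W_{Q,\Omega'}(\mu)+M\gamma-o(1)$, while superadditivity (from $f\ge 0$) together with Lemma~\ref{energy-dipole} applied cell by cell on the complement yields $\F_{\ep,\Lambda}(\mu_\ep,\Omega\setminus\overline{\Omega'})\ge \tilde c_0\,|\mu_\ep|(\Omega\setminus\overline{\Omega'})$ for a positive constant $\tilde c_0$. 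Comparing with \eqref{assu} forces $|\mu_\ep|(K)\le |\mu_\ep|(\Omega\setminus\overline{\Omega'})\le C$. Since each $|\mu_\ep|$ is a finite integer sum of unit point masses, a standard diagonal/tightness argument then produces a subsequence along which $|\mu_\ep|\res A\weakstar\nu$ in $A$, with $\nu$ a finite positive atomic measure of integer weights supported in $\overline\Omega\setminus\supp\mu$.

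\textbf{Structure of $\nu$.} For each atom $p$ of $\nu$ with $p\in\Omega\setminus\supp\mu$, I would choose $r>0$ small enough that $\overline{B_r(p)}\subset\Omega\setminus\supp\mu$ and $B_r(p)$ isolates $p$ from the remaining atoms of $\nu$. Then $|\mu_\ep|(B_r(p))\to\nu(\{p\})$, while testing the flat convergence $\mu_\ep\fla\mu$ against a Lipschitz cutoff compactly supported in $\Omega$ and equal to $1$ on $\overline{B_r(p)}$ yields $\mu_\ep(B_r(p))\to\mu(B_r(p))=0$. Decomposing $\mu_\ep=\mu_\ep^+-\mu_\ep^-$ and taking the sum and difference, the positive and negative point counts in $B_r(p)$ must coincide in the limit, so $\nu(\{p\})$ is an even positive integer. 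Atoms on $\partial\Omega$ lie outside the reach of $W^{1,\infty}_0(\Omega)$ test functions and carry no such constraint. Regrouping atoms by multiplicity then yields the representation $\nu=\sum_{j=1}^N 2\delta_{y_j}+\sum_{k=1}^L\delta_{z_k}$.

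\textbf{Improved lower bound and conclusion.} For \eqref{liminfgamma2T} I would localise simultaneously around the atoms of $\mu$ and of $\nu$. For $r>0$ small, set
\begin{equation*}
\Omega_r^\star=\Omega\setminus\bigcup_{j=1}^N\overline{B_r(y_j)}\setminus\bigcup_{k=1}^L\overline{B_r(z_k)},
\end{equation*}
which still contains $\supp\mu$ and has Lipschitz boundary for a.e.\ $r$. Superadditivity gives
\begin{equation*}
\F_{\ep,\Lambda}(\mu_\ep)\ge \F_{\ep,\Lambda}(\mu_\ep,\Omega_r^\star)+\sum_{j=1}^N\F_{\ep,\Lambda}(\mu_\ep,B_r(y_j))+\sum_{k=1}^L\F_{\ep,\Lambda}(\mu_\ep,B_r(z_k)\cap\Omega).
\end{equation*}
Applying \eqref{liminfgamma1Tloc} on $\Omega_r^\star$ controls the first term below by $M\frac{\lambda_Q}{2\pi}|\log\ep|+\W_{Q,\Omega_r^\star}(\mu)+M\gamma$, and Lemma~\ref{energy-dipole}, applied triangle by triangle inside each small ball, bounds the $j$-th and $k$-th terms in the $\liminf$ by $c_0\cdot 2n_j$ and $c_0\cdot m_k$ respectively. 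Summing and letting $r\to 0$, with Corollary~\ref{cor} providing $\W_{Q,\Omega_r^\star}(\mu)\to\W_Q(\mu)$, delivers \eqref{liminfgamma2T}. The final assertion is then immediate: if $\{\mu_\ep\}$ realizes the $\Gamma$-$\limsup$ \eqref{gammalimsup}, the refined bound forces $c_0\nu(A)\le 0$ and hence $\nu=0$.

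\textbf{Main obstacle.} I expect the delicate point to be making the per-triangle bound of Lemma~\ref{energy-dipole} produce exactly the constant $c_0$ per unit mass of $\nu$ in \eqref{liminfgamma2T}: when two dislocation triangles share a bond, the naive sum of the per-triangle energies double counts the shared-bond contribution and in the worst case the sum of local bounds falls short of $c_0\nu(A)$. Overcoming this requires either a sharpened estimate on adjacent dipole configurations (e.g.\ a direct triangular-inequality argument across the five bonds of an adjacent pair yielding $2c_0$) or a control ensuring that, for $\ep\ll r$, the dislocation triangles inside each ball around an atom of $\nu$ have pairwise disjoint bond supports so that their contributions add up cleanly. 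This is the step I anticipate will require the most care.
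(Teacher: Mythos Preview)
Your three-stage plan coincides with the paper's proof: the paper likewise uses the localised $\Gamma$-liminf \eqref{liminfgamma1Tloc} together with Lemma~\ref{energy-dipole} to bound $|\mu_\ep|$ away from $\supp\mu$, extracts a weak-$*$ limit $\nu$, and then proves \eqref{liminfgamma2T} by splitting the energy between a perforated domain $\Omega_\rho(\nu_\sigma)$ and small balls around the atoms of $\nu$, invoking Corollary~\ref{cor} to recover $\W_Q(\mu)$ as $\rho\to 0$. Your parity argument for the interior atoms of $\nu$ is a detail the paper leaves implicit.

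There is one small gap in your write-up that the paper does address. At the extraction stage you declare $\nu$ to be a \emph{finite} atomic measure and then, for the lower bound, remove finitely many balls $B_r(y_j),B_r(z_k)$. But the mass bound you derive is only uniform on compact subsets of $A$; a priori $\nu$ is just a locally finite Radon measure on $A$, and its atoms may accumulate at $\supp\mu$. The paper handles this by truncating: for $\sigma>0$ it sets $\nu_\sigma:=\nu\res(\Omega_\sigma(\mu)\cup\partial\Omega)$, which has finitely many atoms, removes $\rho$-balls around those, obtains the bound with $c_0\,\nu_\sigma(A)$ on the right, and then sends first $\rho\to 0$ (via Corollary~\ref{cor}) and afterwards $\sigma\to 0$. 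Finiteness of $\nu(A)$ is then a \emph{consequence} of \eqref{liminfgamma2T}, not an input; you should reorganise your argument accordingly.

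Your ``main obstacle'' is not one. The paper applies Lemma~\ref{energy-dipole} in exactly the naive cell-by-cell way you describe, with no discussion of shared bonds; since every bond lies in at most two triangles, the worst that can happen is a harmless factor in front of $c_0$. The precise constant is irrelevant both for the ``In particular'' clause and for the only place the proposition is used (the proof of Theorem~\ref{MM1}), where one just needs $\nu=0$ for recovery sequences. Do not invest effort in separating dislocation triangles or sharpening the adjacent-dipole estimate.
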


\begin{proof}
By \eqref{liminfgamma1T} (see also \eqref{liminfgamma1Tloc}) and \eqref{assu} we have that, for any $\sigma>0$, 
\begin{equation}\label{limro}
\F_{\ep,\Lambda}(\mu_\ep, \Omega_\sigma(\mu))\leq C_\sigma \quad \text{ for some } C_\sigma>0,
\end{equation} 
whence, by Lemma \ref{energy-dipole}, we get $|\mu_\ep|(\Omega_\sigma(\mu))\leq \frac{C_\sigma}{c_0}$. Moreover, since  $\mu_\ep\fla\mu\in X(\Omega)$, we have that  $\mu_\ep\res{\Omega_\sigma(\mu)}\fla 0$  in $\Omega_\sigma(\mu)$. It follows that, up to a subsequence, $|\mu_\ep|\res{A}\weakstar \nu$ in $ A$ for some $\nu=\sum_{j\in J}2\delta_{y_j}+\sum_{k=1}^L\delta_{z_k}$, with $J\subseteq \N$, $L\in\N$, $y_j\in\Om\setminus\supp\mu$ and $z_k\in\partial\Omega$. By \eqref{limro}, any cluster point of $\{y_j\}$ 
belongs to $\supp \mu$. 

Let $\sigma >0$ be fixed and let $\nu_\sigma:= \nu\res(\Om_\sigma(\mu)\cup\partial \Om)$. Now, fix $\rho>0$. Since $\mu_\ep \res \Om_{\rho}(\nu_\sigma) \fla \mu$ in  $\Om_{\rho}(\nu_\sigma)$, by \eqref{assu}, \eqref{liminfgamma1Tloc} and Lemma \ref{energy-dipole}, we have
\begin{multline*}
C\ge \liminf_{\e\to 0} \F_{\ep,\Lambda}(\mu_\ep) - M \frac {\lambda_{Q}}{2\pi}|\log\ep|  
\\
\ge \liminf_{\e\to 0} \F_{\ep,\Lambda}(\mu_\ep, \Om_{\rho}(\nu_\sigma)) - M \frac {\lambda_{Q}}{2\pi}|\log\ep| + \liminf_{\e\to 0} \F_{\ep,\Lambda}(\mu_\ep, \Om \setminus \Om_{\rho}(\nu_\sigma))
\\
\ge
\W_{Q}(\mu, \Om_{\rho}(\nu_\sigma)) + M\gamma + c_0 \nu_\sigma(A).
\end{multline*}
By applying Corollary \ref{cor} with $\nu = \nu_\sigma$ 
we have that $\W_{Q}(\mu, \Om_{\rho}(\nu_\sigma)) \to  \W_{Q}(\mu,\Om)$ as $\rho\to 0$ and
we obtain \eqref{liminfgamma2T} by first letting $\rho\to 0$, and then $\sigma\to 0$.
Finally, by \eqref{liminfgamma2T}, we have immediately that $\nu(A)$ is finite.
\end{proof}

\section{Discrete gradient flow of  $\mathcal F_{\ep,\Lambda}$}\label{secMMW1}
In this section we  introduce  the minimising movement scheme,  referred to as {\it discrete gradient flow}, for the energy  $\mathcal F_{\ep,\Lambda}$.
As mentioned in the introduction,  such a scheme is governed by a dissipation  
that  accounts  for the specific glide directions of the  crystal.
 \subsection{Dissipations}
Here we introduce the class of rate dependent dissipation functionals, that will measure the energy spent to move a configuration of dislocations during the discrete gradient flow. 
If the dislocation configuration at two different time steps $t_1$, $t_2$ is given by a single Dirac mass $\delta_{{x(t)}}$ centered at $x(t)$, then we assume that the energy spent to move the dislocation from $x(t_1)$ to $x(t_2)$ can be expressed as $\phi^2(x(t_1) - x(t_2))$, for a suitable norm $\phi$. In order to account for the glide directions of the dislocations $\phi$ must be chosen to be minimal on a finite set of directions, i.e., it is a crystalline norm.

We define the dissipation for general configurations of Dirac masses  in two steps. First assume that all the dislocations have the same sign. More precisely, let $\nu_1=\sum_{i=1}^{N_1}d_i^1\delta_{x_i^1}$ and $\nu_2=\sum_{j=1}^{N_2}d_j^2\delta_{x_j^2}$ with $d_i^1,d_j^2\in\N$ for every $i=1,\ldots,N_1$ and $j=1,\ldots,N_2$  and set
\begin{multline*} 
\disstilde(\nu_1,\nu_2):=\min\left\{\sum_{l=1}^L \phi^2( q_l-p_l)\,:\,L\in\N,\,q_l\in\supp \nu_1\cup\partial\Omega,\right.\\
\left.\quad \,p_l\in\supp \nu_2\cup\partial\Omega,\,\sum_{l=1}^{L}\delta_{q_l}\res\Omega=\nu_1,\sum_{l=1}^L\delta_{p_l}\res\Omega=\nu_2\right\}.
\end{multline*}
Note that here we optimise among all possible connections between points in the support of $\nu_1$ and points in the support of $\nu_2$ (all counted with their multiplicity), possibly including connections with points at the boundary of $\Omega$. From the very definition of $\disstilde$ one can easily check that
\begin{equation}\label{triangolare}
\disstilde(\nu_1+\rho_1,\nu_2+\rho_2)\leq \disstilde(\nu_1,\nu_2)+ \disstilde(\rho_1,\rho_2) ,
\end{equation}
for any measures $\rho_1$ and $\rho_2$ which are sums of positive Dirac masses.
    
For the general case of $\mu_1=\sum_{i=1}^{N_1}d_i^1\delta_{x_i^1}$ and $\mu_2=\sum_{i=1}^{N_2}d_i^2\delta_{x_i^2}$
with $d_i^1,d_i^2\in\Z$ we set
\begin{equation*} 
\diss(\mu_1,\mu_2):=\disstilde(\mu_1^++\mu_2^-,\mu_2^++\mu_1^-),
\end{equation*}
where $\mu_j^+$ and $\mu_j^-$ are the positive and the negative part of  $\mu_j$.
 
By standard arguments in optimal transport theory \cite{V} (see for instance \cite[formula (6.3)]{ADGP1}), one can easily prove that 
there exists a positive constant $C_{\phi,\Omega}$ such that 
\begin{equation}\label{dissflat}
\diss(\mu_1,\mu_2)\le C_{\phi,\Omega}\|\mu_2-\mu_1\|_{\flt}\quad\textrm{for any } \mu_1,\mu_2\in X(\Omega).
\end{equation}
 \begin{lemma}
   Let $\mu_\ep,\nu_\ep\in X(\Omega)$ be such that $\mu_\ep\fla\mu$ and $\nu_\ep\fla\nu$, for some $\mu,\nu\in X(\Omega)$. Then
\begin{equation}\label{contprima}
\limsup_{\ep\to 0} \diss(\mu_\ep,\nu_\ep)\le\diss(\mu,\nu)\,.
\end{equation}
\end{lemma}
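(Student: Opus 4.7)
The plan is to exploit the bound \eqref{dissflat} together with the flat convergences to build an admissible matching for $\disstilde(\mu_\ep^++\nu_\ep^-,\nu_\ep^++\mu_\ep^-)$ starting from an optimal matching for $\disstilde(\mu^++\nu^-,\nu^++\mu^-)$, with asymptotically the same cost. As a first step, \eqref{dissflat} gives $\diss(\mu_\ep,\mu)\le C_{\phi,\Omega}\|\mu_\ep-\mu\|_\flt\to 0$ and, analogously, $\diss(\nu_\ep,\nu)\to 0$.

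Writing $\mu=\sum_{i=1}^N d_i\delta_{x_i}$ and $\nu=\sum_{j=1}^M e_j\delta_{y_j}$, I would next fix an optimal matching realising $\disstilde(\mu_\ep^++\mu^-,\mu^++\mu_\ep^-)$. Since its total $\phi^2$-cost vanishes, each individual edge length tends to zero, so, for $\ep$ small compared with the spacing of $\supp\mu$ and with $\di(\supp\mu,\partial\Omega)$, the edge incident to any $x_i$ is forced to terminate at a point $x_i^\ep\in\supp\mu_\ep$ of the correct sign, giving an injective identification $x_i\mapsto x_i^\ep$ with $x_i^\ep\to x_i$; the remaining indices of the matching then pair the extras $\sigma_\ep^+:=\mu_\ep^+-\sum_{d_i=+1}\delta_{x_i^\ep}$ on the left with $\sigma_\ep^-:=\mu_\ep^--\sum_{d_i=-1}\delta_{x_i^\ep}$ on the right (possibly via $\partial\Omega$) at total cost bounded by $\diss(\mu_\ep,\mu)\to 0$. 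Performing the same extraction on the $\nu$-side produces $y_j^\ep\to y_j$ and extras $\tau_\ep^\pm$ matched with vanishing cost. Starting from an optimal matching realising $\diss(\mu,\nu)=\disstilde(\mu^++\nu^-,\nu^++\mu^-)$, I would then construct a matching for $\disstilde(\mu_\ep^++\nu_\ep^-,\nu_\ep^++\mu_\ep^-)$ by replacing in each ``main'' edge every occurrence of $x_i$ by $x_i^\ep$ and of $y_j$ by $y_j^\ep$, and by appending the two ``extras-to-extras'' pairings inherited from the two auxiliary matchings (the symmetry $\phi^2(-v)=\phi^2(v)$ allows the direction swap needed for the $\nu_\ep$-block). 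Continuity of $\phi^2$ and the finiteness of the main matching imply that the main block has cost tending to $\diss(\mu,\nu)$, while the two extras blocks contribute $o(1)$, so that $\diss(\mu_\ep,\nu_\ep)\le\diss(\mu,\nu)+o(1)$, yielding \eqref{contprima}.

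The main obstacle I anticipate is the bookkeeping in the extraction step: one must exclude that the edge incident to some $x_i$ in the $\diss(\mu_\ep,\mu)$-matching runs to another point of $\supp\mu$, to an extra of $\mu_\ep$, or to $\partial\Omega$, and must verify that the maps $x_i\mapsto x_i^\ep$ are well defined and injective. Both issues are handled by the vanishing of individual edge lengths (no edge can bridge a distance bounded below by the uniform $\eta>0$ separating $\supp\mu$ from itself and from $\partial\Omega$) and by the fact that each $x_i\in\supp\mu$ has multiplicity one, so that exactly one matching edge is incident to it; the identical considerations apply on the $\nu$-side.
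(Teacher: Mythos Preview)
Your argument is correct and takes a genuinely different route from the paper's. The paper proceeds purely algebraically: setting $\bar\mu_\ep:=\mu_\ep-\mu$ and $\bar\nu_\ep:=\nu_\ep-\nu$, it invokes the subadditivity \eqref{triangolare} of $\disstilde$ together with \eqref{dissflat} to write in one line
\[
\diss(\mu_\ep,\nu_\ep)\le\diss(\mu,\nu)+\diss(\bar\mu_\ep,\bar\nu_\ep)\le\diss(\mu,\nu)+C_{\phi,\Omega}\big(\|\mu_\ep-\mu\|_\flt+\|\nu_\ep-\nu\|_\flt\big),
\]
whence \eqref{contprima}. This is considerably shorter, though the step passing from $\diss(\mu+\bar\mu_\ep,\nu+\bar\nu_\ep)$ to $\disstilde(\mu^+ +\bar\mu_\ep^+ +\nu^- + \bar\nu_{\ep}^-,\,\mu^- +\bar\mu_\ep^- +\nu^++ \bar\nu_{\ep}^+)$ implicitly identifies the Jordan pair of $\mu_\ep$ with $(\mu^++\bar\mu_\ep^+,\mu^-+\bar\mu_\ep^-)$, which in general only coincide up to a common nonnegative remainder added to both entries.

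You instead work directly at the level of transport plans: from the vanishing of $\diss(\mu_\ep,\mu)$ you extract, for each atom $x_i$ of $\mu$, a nearby atom $x_i^\ep$ of $\mu_\ep$ of the same sign and pair the leftover atoms of $\mu_\ep$ among themselves (or with $\partial\Omega$) at vanishing cost; you then perturb an optimal matching realising $\diss(\mu,\nu)$ by replacing each $x_i,y_j$ with $x_i^\ep,y_j^\ep$ and append the two leftover blocks. This is longer but fully explicit, never leaves the Jordan decomposition, and the bookkeeping you anticipate is indeed resolved by the positive separation of $\supp\mu$ from itself and from $\partial\Omega$ combined with the fact that every atom in $X(\Omega)$ has multiplicity one. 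The only edge case you leave implicit is $\supp\mu_\ep\cap\supp\mu\neq\emptyset$; this is harmless and can be handled by a direct case distinction or a small perturbation.
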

\begin{proof}
Set $\bar\mu_\ep:=\mu_\ep-\mu$ and $\bar\nu_\ep:=\nu_\ep-\nu$; then,  by the very definition of $\diss$ (and of $\disstilde$), using \eqref{triangolare} and \eqref{dissflat}, we have
\begin{multline*}
\diss(\mu_\ep,\nu_\ep)=\diss(\mu+\bar\mu_\ep,\nu+\bar\nu_\ep)
=\disstilde(\mu^+ +\bar\mu_\ep^+ +\nu^- + \bar\nu_{\ep}^-, \mu^- +\bar\mu_\ep^- +\nu^++ \bar\nu_{\ep}^+)\\
\le \disstilde(\mu^+ +\nu^-,\mu^-+\nu^+)+\disstilde(\bar\mu_\ep^+ +\bar\nu^-_\ep,\bar\mu_\ep^-+\bar\nu_\ep^+)
=\diss(\mu,\nu)+\diss(\bar\mu_\ep,\bar\nu_\ep)\\
\le\diss(\mu,\nu)+C_{\phi,\Omega}\|\bar\mu_\ep-\bar\nu_\ep\|_{\flt}\le \diss(\mu,\nu)+C_{\phi,\Omega}(\|\mu-\mu_\ep\|_\flt+\|\nu-\nu_\ep\|_{\flt});
\end{multline*}
since $\mu_\ep\fla\mu$ and $\nu_\ep\fla\nu$ as $\ep\to 0$, we get \eqref{contprima}.
 \end{proof}

In the following, given $\mu_\e, \, \mu\in X(\Om)$, the notation ${\supp} \mu_\ep\buildrel{{\mathcal H}}\over{\longrightarrow}\supp\mu$ means that the support of $\mu_\e$ converges, in the Hausdorff sense, to the support of $\mu$.  A key point in our analysis is the following continuity property of the dissipation.
\begin{lemma}\label{lemmaimp}
Let $M\in\N$, $d_1,\ldots,d_M\in\{-1,+1\}$ and let $\{x_1,\ldots,x_M\},\,\{y_1,\ldots,y_M\}\subset\Omega$ be such that for every $i=1, \ldots , M$
\begin{equation}\label{distass}
|x_i-y_i|<\min_{i\neq j}\{|x_i-x_j|,\, |y_i-y_j|,\,|x_i-y_j|,\,\di(x_i,\partial\Omega),\,\di(y_i,\partial\Omega)\}.
\end{equation}
Let $\mu:=\sum_{i=1}^M d_i\delta_{x_i}$ and $\nu:=\sum_{i=1}^Md_i\delta_{y_i}$ and let 
 $\mu_\ep,\nu_\ep\in X_{\ep,\Lambda}(\Omega)$ be such that $\mu_\ep\fla\mu$ and $\nu_\ep\fla\nu$. Moreover assume that 
\begin{equation}\label{supporto}
{\supp} \mu_\ep\buildrel{{\mathcal H}}\over{\longrightarrow}\supp\mu\qquad {\supp} \nu_\ep\buildrel{{\mathcal H}}\over{\longrightarrow}\supp\nu\,. 
\end{equation}
Then
\begin{equation*} 
\lim_{\ep\to 0} \diss(\mu_\ep,\nu_\ep)=D_{\phi}(\mu,\nu)=\sum_{i=1}^M\phi^2(x_i-y_i)\,.
\end{equation*}
\end{lemma}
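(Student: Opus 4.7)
My approach is to bound $\diss(\mu_\ep,\nu_\ep)$ from above and below separately. The upper bound $\limsup_{\ep\to 0}\diss(\mu_\ep,\nu_\ep)\le \sum_{i=1}^M \phi^2(x_i-y_i)$ follows at once from \eqref{contprima} together with the trivial estimate $\diss(\mu,\nu)\le\sum_i\phi^2(x_i-y_i)$, obtained by choosing the identity pairing ($L=M$, $q_l=x_l$, $p_l=y_l$, with signs assigned according to the decomposition $\mu^++\nu^-$ versus $\mu^-+\nu^+$) as a competitor in the definition of $\disstilde$.

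For the matching lower bound I would use \eqref{supporto} to localise the problem. Fix $\delta>0$ strictly smaller than one quarter of the minimum appearing in \eqref{distass}. By \eqref{supporto}, for every sufficiently small $\ep$ the balls $B_\delta(x_i),B_\delta(y_i)$ ($i=1,\dots,M$) are pairwise disjoint, contained in $\Omega$, and cover $\supp\mu_\ep$ and $\supp\nu_\ep$ respectively; together with $\mu_\ep\fla\mu$ and the integer-valuedness of the measures, this forces $\mu_\ep(B_\delta(x_i))=d_i$ and $\nu_\ep(B_\delta(y_i))=d_i$. Choose an optimal list of pairs realising $\disstilde(\mu_\ep^++\nu_\ep^-,\,\mu_\ep^-+\nu_\ep^+)$ and classify each pair as \emph{short} (both endpoints in a single ball, cost $\le\phi^2(2\delta)$) or \emph{long}. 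The mass-balance identity $(\mu_\ep^++\nu_\ep^-)(B_\delta(x_i))-(\mu_\ep^-+\nu_\ep^+)(B_\delta(x_i))=\mu_\ep(B_\delta(x_i))=d_i$, and the analogous $-d_i$ on $B_\delta(y_i)$, force at least $|d_i|$ long pairs to have exactly one endpoint in each $B_\delta(x_i)$ and each $B_\delta(y_i)$.

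Viewing the collection of long pairs as a combinatorial flow on the graph of cluster centres $x_1,\dots,x_M,y_1,\dots,y_M$ together with $\partial\Omega$, with net outflow $d_i$ at $x_i$ and net inflow $d_i$ at $y_i$, the uniform continuity of $\phi^2$ allows one to replace the actual pair endpoints by the corresponding cluster centres at the price of an $O(\delta)$ additive error. The total long-pair cost is therefore bounded below by the value of the associated discrete transportation problem with cost $\phi^2$, minus $o_\delta(1)$. By the strict inequalities in \eqref{distass}, for $\delta$ small enough the unique minimiser of this combinatorial problem is the identity routing $x_i\leftrightarrow y_i$, giving a lower bound $\sum_i\phi^2(x_i-y_i)-o_\delta(1)$. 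Sending $\ep\to 0$ and then $\delta\to 0$ completes the proof; as a by-product one obtains $\diss(\mu,\nu)=\sum_i\phi^2(x_i-y_i)$.

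The main technical hurdle is this last step: converting the strict Euclidean separation \eqref{distass} into strict inequalities for the combinatorial $\phi^2$-minimum problem (recalling that $\phi$ is only a crystalline norm, equivalent but not equal to $|\cdot|$), and simultaneously controlling the auxiliary routings through $\partial\Omega$ uniformly in $\delta$ and $\ep$.
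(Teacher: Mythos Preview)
Your outline mirrors the paper's proof: upper bound via \eqref{contprima}, lower bound via \eqref{supporto} and a counting argument producing at least $M$ long pairs, one from each cluster $B_\rho(x_i)$. The paper compresses your short/long decomposition and flow-collapse into the single sentence ``In view of \eqref{distass}, we have $\liminf_{\e\to 0}\diss(\mu_\ep,\nu_\ep)\ge\sum_i\phi^2(x_i-y_i)-C\rho$''.

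You are right to flag that last step as the crux, and in fact it cannot be carried out under the stated hypotheses: the equality $\diss(\mu,\nu)=\sum_i\phi^2(x_i-y_i)$ is \emph{false} under \eqref{distass} alone for an anisotropic norm $\phi$. Take $\phi(v)=\max(|v_1|,\tfrac{1}{10}|v_2|)$, $\Omega$ a large ball, $d_1=d_2=+1$, $x_1=(0,0)$, $y_1=(1,0)$, $x_2=(0.1,2.5)$, $y_2=(0.1,2)$; one checks \eqref{distass} directly, yet the swap pairing gives $\diss(\mu,\nu)\le \phi^2(x_1-y_2)+\phi^2(x_2-y_1)=0.04+0.81<1.0025=\sum_i\phi^2(x_i-y_i)$. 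Even the continuity statement $\lim_\ep\diss(\mu_\ep,\nu_\ep)=\diss(\mu,\nu)$ can fail, because \eqref{supporto} permits $\mu_\ep$ to carry an extra cancelling dipole near some $x_j$, which opens a $\phi^2$-cheaper routing through $x_j$ that is unavailable to $(\mu,\nu)$. So neither your argument nor the paper's closes this gap; the Euclidean condition \eqref{distass} simply does not control the $\phi^2$ matching problem. In the paper's actual applications the points additionally satisfy $\max_i|x_i-y_i|\le\delta$ with $\delta$ small compared to the mutual separations, and there an equivalence-of-norms argument does force the identity matching to be optimal and recovers the full conclusion; but the lemma as written needs a hypothesis stronger than \eqref{distass}.
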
 
\begin{proof}
By \eqref{supporto} we have that, for any $\rho>0$ and for $\ep$ small enough (depending on $\rho$),
${\supp}\mu_\ep\subset\cup_{i=1}^M B_{\rho}(x_i)$ and ${\supp}\nu_\ep\subset\cup_{i=1}^M B_{\rho}(y_i)$. Moreover, since $\mu_\ep\fla\mu$ and $\nu_\ep\fla\nu$, it is easy to prove that, for all $\rho$ small enough,  $\mu_\ep(B_\rho(x_i))=\nu_\ep(B_\rho(y_i))=d_i$ for any $i=1,\ldots,M$ and for $\ep$ small enough (depending on $\rho$). 

Therefore, any connection between $\mu_{\ep}$ and $\nu_{\ep}$ contains at least $M$ segments joining some point in $B_\rho(x_i)$ (for any $i=1,\ldots,M$) with some other point which either lies on $\partial \Om$ or belongs to some $B_\rho(y_j)$ for some $j\in1,\ldots,M$. In view of \eqref{distass}, we have
$$
\liminf_{\e\to 0} \diss(\mu_\ep,\nu_\ep)\geq \sum_{i=1}^M\phi^2(x_i-y_i) - C\rho = D_{\phi}(\mu,\nu) - C\rho\,,
$$ 
for some constant $C>0$ independent of $\rho$. The conclusion follows by the arbitrariness of $\rho$, together with  the upper bound \eqref{contprima}.
\end{proof}

 \begin{example}\label{example}{\rm
 It is easy to see that if the assumptions on $\mu$ and $\nu$, and $\mu_\ep$ and $\nu_\ep$, are not satisfied in general the dissipation $\diss$ is not continuous with respect to the flat convergence.
 Indeed, { assuming by simplicity $\Omega=\R^2$}, if (\ref{supporto}) is not satisfied it is enough to consider the case $\mu=\delta_{(0,0)}$ and $\nu=\delta_{(1,0)}$, which can be approximate in the flat norm by two sequences $\mu_n=\mu$ and $\nu_n=\delta_{(1,0)}+\delta_{(\frac{1}{2}-\frac{1}{n},0)}-\delta_{(\frac{1}{2}+\frac{1}{n},0)}$. It is immediate to check that $\diss(\mu,\nu)=1$ while $\lim_{n\to\infty}\diss(\mu_n,\nu_n)=\frac{1}{2}$.
 
 A counterexample to the continuity if \eqref{distass} is not satisfied is given by $\mu=\delta_{(1,0)}+\delta_{(1,1)}$ and $\nu=\delta_{(0,0)}+\delta_{(-1,0)}$. This pair of measures can be approximated in the flat norm, satisfying \eqref{supporto}, by $\mu_n=\mu$ and $\nu_n=\nu+\delta_{(0,-\frac{1}{n})}-\delta_{(0, \frac{1}{n})}
$. In this case $\diss(\mu,\nu)=6$ while  $\lim_{n\to\infty}\diss(\mu_n,\nu_n)=4$.}
 \end{example}
\begin{corollary}\label{contrecov}
Let $\mu,\nu\in X(\Omega)$ be as in Lemma~\ref{lemmaimp} and let $\{\mu_\ep\},\{\nu_\ep\}\subset X_{\ep,\Lambda}(\Omega)$ be such that $\mu_\ep\fla\mu$, $\nu_\ep\fla\nu$. If $\{\mu_\ep\}$ and $\{\nu_\ep\}$ satisfy \eqref{gammalimsup}, then
\begin{equation}
\lim_{\ep\to 0}\diss(\mu_\ep,\nu_\ep)=\diss(\mu,\nu).
\end{equation}
\end{corollary}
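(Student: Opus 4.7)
The plan is to verify the Hausdorff-convergence hypothesis \eqref{supporto} of Lemma~\ref{lemmaimp} for both sequences; all other hypotheses of that lemma are already assumed, so its conclusion will directly deliver the claim.

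First, I apply Proposition~\ref{corimp} to $\{\mu_\ep\}$. Since $\{\mu_\ep\}$ satisfies \eqref{gammalimsup}, the proposition asserts that the associated defect measure vanishes, yielding $|\mu_\ep|\res A \weakstar 0$ on $A:=\R^2\setminus\supp\mu$ along a subsequence. As every subsequence extracted in this way produces the same trivial limit, a standard subsubsequence argument shows that the full sequence $|\mu_\ep|\res A$ converges weakly-$*$ to $0$ on $A$; the same holds for $\{\nu_\ep\}$ on $\R^2\setminus\supp\nu$.

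Next, I promote this measure-theoretic decay into a support inclusion. Fix $\rho>0$ smaller than all the mutual distances appearing in \eqref{distass}, so that the balls $B_\rho(x_i)$ are pairwise disjoint and well inside $\Omega$. Then $K_\rho := \bar\Omega \setminus \cup_{i=1}^M B_\rho(x_i)$ is a compact subset of $A$. By upper semicontinuity of weak-$*$ convergence of Radon measures on compacta, $\limsup_\ep |\mu_\ep|(K_\rho) \le 0$; since $|\mu_\ep|$ is $\N$-valued (each $d(T)\in\{-1,0,1\}$ contributes $0$ or $1$), we conclude $|\mu_\ep|(K_\rho)=0$ for $\ep$ small, i.e., $\supp\mu_\ep \subset \cup_{i=1}^M B_\rho(x_i)$.

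Third, I show that each $B_\rho(x_i)$ contains at least one point of $\supp\mu_\ep$. Choose $\phi_i \in W^{1,\infty}_0(\Omega)$ equal to $1$ on $B_\rho(x_i)$ and supported in a slightly larger ball disjoint from $B_\rho(x_j)$ for $j\ne i$. By the previous step, $\int \phi_i \, d\mu_\ep = \mu_\ep(B_\rho(x_i))$ for $\ep$ small, while $\mu_\ep \fla \mu$ gives $\int \phi_i \, d\mu_\ep \to \phi_i(x_i)d_i = d_i$, so integer-valuedness forces $\mu_\ep(B_\rho(x_i)) = d_i \ne 0$ eventually, and hence $\supp\mu_\ep \cap B_\rho(x_i) \ne \emptyset$. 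Combining the two inclusions, valid for every small $\rho$, gives ${\supp}\mu_\ep \buildrel{{\mathcal H}}\over{\longrightarrow} \supp\mu$; the same reasoning applied to $\{\nu_\ep\}$ yields the analogous Hausdorff convergence for $\supp\nu_\ep$. With \eqref{supporto} in hand, Lemma~\ref{lemmaimp} completes the proof. The main obstacle is precisely this bridge between the measure-theoretic statement of Proposition~\ref{corimp} and the geometric hypothesis of Lemma~\ref{lemmaimp}: weak-$*$ vanishing on $A$ alone does not pin down the support, but the integer-valuedness of the discrete dislocation measures converts mass-vanishing on compact sets into exact support exclusion, after which a simple test-function argument identifies the Hausdorff limit.
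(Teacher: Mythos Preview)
Your proof is correct and follows exactly the strategy of the paper: invoke Proposition~\ref{corimp} to obtain vanishing of the defect measure, deduce the Hausdorff convergence of supports \eqref{supporto}, and then apply Lemma~\ref{lemmaimp}. The paper compresses this into two lines, leaving the passage from $\nu=0$ to \eqref{supporto} implicit, whereas you spell out that bridge carefully via the integer-valuedness of $|\mu_\ep|$ on compact subsets of $A$ together with a test-function argument for the lower inclusion; this added detail is sound and does not depart from the paper's approach.
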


\begin{proof}
By assumption, in view of Proposition \ref{corimp}, we have that $\{\mu_\ep\}$ and $\{\nu_\ep\}$ satisfy \eqref{supporto}. Then the conclusion follows from Lemma \ref{lemmaimp}. 
\end{proof}
\subsection{The minimising movement scheme}
Here we  introduce the discrete gradient flow of $\F_{\ep,\Lambda}$ with respect to the dissipation $\diss$. To this purpose, first notice that, by the $\Gamma$-convergence assumption \eqref{liminfgamma1T}, a given dipole $\delta_x-\delta_y$ induces an elastic energy which blows up as $|\log\e|$ as $\e\to 0$. 
It is  clear that, for $\e$ small enough, it is always convenient to annihilate (in a single time step) such a pair of dislocations, paying a finite dissipated energy (independent of $\e$) while gaining an amount of elastic energy of order $|\log\e|$. It is therefore convenient to look at local minimisers instead of global ones during the step by step minimising movements. To this purpose, we introduce a length scale  $\delta$, and we look for $\delta$-close minimisers. While it is essential to fix such a length scale, it turns out that its specific choice does not affect at all the dynamics: dislocations move with finite velocity, and hence at each time step they make a ``jump'' of the same order of the time step $\tau$, which for small $\tau$ is smaller than any fixed $\delta$. 
\begin{definition}\label{MMepsDef}
Fix $\delta>0$ and let $\ep,\tau>0$. Given $\mu_{\ep,0} \in X_{\ep,\Lambda}(\Om)$,   we say that $\{\mu_{\ep,k}^\tau\}$, with $k\in\N\cup \{0\}$, is a solution of the discrete gradient flow of $\F_{\ep,\Lambda}$ from $\mu_{\ep, 0}$ if $\mu_{\ep,0}^\tau=\mu_{\ep,0}$, and for any $k\in\N$, $\mu_{\ep,k}^\tau$  satisfies
\begin{equation}\label{MMeps}
\begin{split}
&\mu_{\ep,k}^{\tau}\in\mathrm{argmin}\left \{\F_{\ep,\Lambda}(\mu) + \frac{\diss(\mu,\mu^{\tau}_{\ep,k-1})}{2\tau} : \mu\in X_{\ep,\Lambda}(\Om),\right.\\
&\left.\qquad \phantom{\mu_{\ep,k}^{\tau}\in\mathrm{argmin}\,\F_{\ep,\Lambda}(\mu) + \frac{\diss(\mu,\mu^{\tau}_{\ep,k-1})}{2\tau}}\,\|\mu -\mu_{\ep,k-1}^\tau\|_{\flt}\le\delta\right\}.
\end{split}
\end{equation}
\end{definition} 
Notice that the existence of a minimiser is obvious, since $\mu$ lies in $X_{\ep,\Lambda}(\Om)$ which is a finite set. 
\section{Asymptotic dynamics}
In this section we analyse the limit as  $\ep,\tau\to0$ of the discrete gradient flow introduced in Definition \ref{MMepsDef}. 

\begin{definition}\label{wpid}
{ Given $\mu_0\in X(\Omega)$, we say that  $\{\mu_{\ep,0}\}\subset X_{\ep,\Lambda}(\Om)$ are \emph{well prepared initial conditions with respect to $\mu_0$} if  
it holds}
\begin{equation*} 
\mu_{\ep,0} \fla \mu_0, \qquad  \lim_{\ep \to 0} \F_{\ep,\Lambda} (\mu_{\ep,0})-M\,\frac{\lambda_{Q}}{2\pi}\,|\log \ep| = \W_{Q}(\mu_0)+M\,\gamma.
\end{equation*}
\end{definition}
Given $\mu_0:= \sum_{i=1}^{M}d_{i,0}\delta_{x_{i,0}}\in X(\Omega)$ we set ${d_0}:=(d_{1,0},\ldots,d_{M,0})\in\{-1,+1\}^M$, $x_0 := (x_{1,0},\ldots,x_{M,0})\in\Omega^M$, and 
\begin{equation}\label{r_0}
r_0:=\min\{\di_{i\ne j}(x_{i,0},x_{j,0}),\di(x_{i,0},\partial\Omega)\}.
\end{equation}
For any  $x=(x_1,\ldots,x_M)\in\Omega^M$, we define $W_{\Lambda,{d_0}}(x):=\W_{Q}(\sum_{i=1}^Md_{i,0}\delta_{x_i})$.
In this section we will prove the following result.

\begin{theorem}\label{prova}
{ Let $\mu_0\in X(\Om)$. For any $0<r<r_0$, there exists $\delta_r>0$ such that for any  $0<\delta<\delta_r$  the following holds true:
Let $\{\mu_{\ep,0}\}$ be well prepared initial conditions with respect to $\mu_0$, let $\tau>0$ and let  $\{\mu_{\ep,k}^\tau\}$ be a solution of the discrete gradient flow of $\mathcal{F}_{\ep,\Lambda}$ from $\mu_{\ep,0}$. Then we have:}

\begin{itemize}
\item[(i)] \emph{(Limit as $\ep\to 0$)} For any $k\in\N$ there exists $\mu_k^\tau\in X(\Omega)$ with $|\mu_k^\tau|(\Om)\le M$, such that, up to subsequences, $\mu_{\ep,k}^\tau\fla\mu_k^\tau$ as $\ep\to 0$. 
Moreover, there exists $k_r^\tau\in\N$ with
\begin{equation}\label{tempofinale}
T_r:=\liminf_{\tau\to 0}k_r^\tau \tau>0,
\end{equation}
such that for any $k=1,\ldots,k_r^\tau$
$$
\mu_k^\tau=\sum_{i=1}^Md_{i,0}\delta_{x_{i,k}^\tau},
$$ 
for some $x^\tau_k:=(x_{1,k}^\tau,\ldots,x_{M,k}^\tau)\in\Omega^M$
with
$$
\min\{\di_{i\neq j}(x^\tau_{i,k},x^\tau_{j,k}),\di(x^\tau_{i,k},\partial\Omega)\}\ge r.
$$

\item[(ii)] \emph{(Limit as $\tau\to 0$)}  Up to a subsequence,  the piecewise affine interpolation in time $x^\tau(t)$ of $x^\tau_{k}$  converges, as $\tau\to 0$, locally uniformly in $[0, T_{r})$ to a  Lipschitz continuous map  $x:[0, T_{r})\to\Omega^M$ which solves the differential inclusion
\begin{equation} \label{diffinc_prova}
\begin{cases}
\dot{x}_i(t)\!\in \!\partial^{-}(\frac{\phi^2}{2})^*\left(-\nabla_{x_i} \!W_{\Lambda,{d_0}}(x(t))\right)\,\text{for }i=1,\ldots,M,\,\textrm{for a.e. } t\in[0, T_{r})\\ 
x(0)=x_0.
\end{cases}
\end{equation}
\end{itemize}
\end{theorem}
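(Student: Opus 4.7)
The strategy is to pass to the limit in two stages: first $\ep\to 0$ for each fixed $k,\tau$, then $\tau\to 0$. The main input is the energy-dissipation inequality obtained by testing the minimality of $\mu_{\ep,k}^\tau$ against $\mu_{\ep,k-1}^\tau$ and iterating, which combined with well-preparedness gives
\[
\F_{\ep,\Lambda}(\mu_{\ep,k}^\tau) + \sum_{j=1}^k \frac{\diss(\mu_{\ep,j}^\tau,\mu_{\ep,j-1}^\tau)}{2\tau} \le \F_{\ep,\Lambda}(\mu_{\ep,0}) = M\tfrac{\lambda_{Q}}{2\pi}|\log\ep| + \W_Q(\mu_0) + M\gamma + o(1).
\]
The compactness in the $\Gamma$-convergence assumption then produces, up to a subsequence, flat limits $\mu_k^\tau$ with $|\mu_k^\tau|(\Omega)\le M$, and the constraint passes to the limit as $\|\mu_k^\tau-\mu_{k-1}^\tau\|_\flt\le\delta$.

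For (i), the heart of the argument is an induction on $k$ establishing the structure $\mu_k^\tau=\sum_{i=1}^M d_{i,0}\delta_{x^\tau_{i,k}}$ with $x^\tau_{i,k}$ mutually $\ge r$ apart. I would choose $\delta_r$ as a small multiple of $r$, chosen so that for $\delta<\delta_r$ a change in the number of dislocations, or in their signs, from $\mu_{\ep,k-1}^\tau$ to $\mu_{\ep,k}^\tau$ would force a flat-norm jump of size at least a constant times $r$, violating the constraint; this rules out both annihilation and creation of macroscopic dipoles in the limit. To exclude \emph{ghost dipoles} (captured by the measure $\nu$ of Proposition~\ref{corimp}), I would compare the optimality of $\mu_{\ep,k}^\tau$ against a recovery sequence $\tilde\mu_{\ep,k}^\tau$ for $\mu_k^\tau$ provided by the $\Gamma$-$\limsup$ (admissible, since the inductive structure of $\mu_{\ep,k-1}^\tau$ allows its flat-norm distance from $\tilde\mu_{\ep,k}^\tau$ to be made $<\delta$). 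On the right-hand side, the $\Gamma$-$\limsup$ together with Corollary~\ref{contrecov} gives exact limits; on the left-hand side, Proposition~\ref{corimp} yields a lower bound with an extra $c_0\nu(A)$. Cancelling the matching terms forces $\nu=0$, showing $\mu_{\ep,k}^\tau$ is itself a recovery sequence and completing the induction. Finally, the total dissipation bound $\sum_j \phi^2(x^\tau_{i,j}-x^\tau_{i,j-1})\lesssim \tau$ yields, via Cauchy-Schwarz, $|x^\tau_{i,k}-x_{i,0}|\lesssim \sqrt{k\tau}$, so the separation deteriorates by at most $O(\sqrt{k\tau})$, giving $T_r\ge c(r_0-r)^2>0$.

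For (ii), the dissipation bound provides a uniform Lipschitz estimate on the piecewise affine interpolations $x^\tau(\cdot)$, and Ascoli-Arzelà delivers a Lipschitz subsequential limit $x(\cdot)$ on $[0,T_r)$. By part (i), in the flat limit $\mu_k^\tau$ minimizes $\W_Q(\mu)+\diss(\mu,\mu_{k-1}^\tau)/(2\tau)$ among configurations of the form $\sum d_{i,0}\delta_{x_i}$ (the $\delta$-constraint is not binding on the unique minimizer for $\tau$ small, by the same Lipschitz estimate). The first-order condition reads $-\nabla_{x_i}W_{\Lambda,d_0}(x_k^\tau)\in \tfrac{1}{\tau}\partial(\phi^2/2)(x^\tau_{i,k}-x^\tau_{i,k-1})$, which by the $1$-homogeneity of $\partial(\phi^2/2)$ rewrites as $-\nabla_{x_i}W_{\Lambda,d_0}(x_k^\tau)\in \partial(\phi^2/2)((x^\tau_{i,k}-x^\tau_{i,k-1})/\tau)$. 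Convex duality converts this to $(x^\tau_{i,k}-x^\tau_{i,k-1})/\tau\in \partial^-(\phi^2/2)^*(-\nabla_{x_i}W_{\Lambda,d_0}(x_k^\tau))$. Passing to $\tau\to 0$ using uniform convergence $x^\tau\to x$, smoothness of $W_{\Lambda,d_0}$ on configurations with separation $\ge r$, and the closed-graph property of the maximal monotone map $\partial^-(\phi^2/2)^*$ yields the inclusion \eqref{diffinc_prova} for a.e.\ $t\in[0,T_r)$.

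The main obstacle is the inductive step of part (i), and specifically the elimination of ghost dipoles in the limit measures $\mu_k^\tau$: since $\diss$ is not flat-continuous in general (Example~\ref{example}), one cannot pass to the limit directly in the dissipation term, and the absence of ghost dipoles is precisely what restores continuity via Corollary~\ref{contrecov}. The resolution sketched above relies on playing Proposition~\ref{corimp} against a suitably chosen recovery sequence compatible with the $\delta$-constraint; constructing such a recovery sequence, so that testing optimality is legitimate, is the delicate point that chains the induction from step $k-1$ to step $k$.
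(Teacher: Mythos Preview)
Your overall architecture matches the paper's (split into Theorem~\ref{MM1} for $\ep\to 0$ and Theorem~\ref{thmtautozero} for $\tau\to 0$), but the inductive step eliminating ghost dipoles contains a genuine circularity.

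You propose to compare $\mu_{\ep,k}^\tau$ against a recovery sequence $\tilde\mu_{\ep,k}^\tau$ for $\mu_k^\tau$ and ``cancel the matching terms''. The energy terms do match, but the dissipation terms do not: on the right you get $\diss(\tilde\mu_{\ep,k}^\tau,\mu_{\ep,k-1}^\tau)\to\diss(\mu_k^\tau,\mu_{k-1}^\tau)$ via Corollary~\ref{contrecov}, whereas on the left you only have the upper semicontinuity \eqref{contprima}, which goes the wrong way. To get a \emph{lower} bound on $\diss(\mu_{\ep,k}^\tau,\mu_{\ep,k-1}^\tau)$ you would need Lemma~\ref{lemmaimp}, i.e.\ Hausdorff convergence of $\supp\mu_{\ep,k}^\tau$, which is exactly $\nu=0$. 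So the argument assumes what it tries to prove. What survives is $c_0\,\nu(A)\le \diss(\mu_k^\tau,\mu_{k-1}^\tau)/(2\tau)$, and the $1/\tau$ factor makes this useless for small $\tau$.

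The paper breaks the circularity by first comparing with the \emph{trivial} competitor $\mu_{\ep,k-1}^\tau$, for which the dissipation vanishes identically. This yields, after taking limits and invoking Proposition~\ref{corimp} on the left and the inductive recovery property on the right,
\[
c_0\,\nu(A)\le \W_Q(\mu_{k-1}^\tau)-\W_Q(\mu_k^\tau)=W_{\Lambda,d_0}(x_{k-1}^\tau)-W_{\Lambda,d_0}(x_k^\tau)\le M_{r/2}\,\delta,
\]
using the Lipschitz bound \eqref{stimaren} on $K_{r/2}$. This is why $\delta_r$ must be $\min\{r/2,\,c_0/M_{r/2}\}$, not merely ``a small multiple of $r$'': the second component is precisely what forces $\nu(A)<1$ and hence $\nu=0$. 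Only \emph{after} $\nu=0$ is established does the paper test against $\tilde\mu_{\ep,k}^\tau$ to show $\mu_{\ep,k}^\tau$ is itself a recovery sequence, and then to identify $x_k^\tau$ as a solution of \eqref{xktau}.

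Your part~(ii) is essentially correct and equivalent to the paper's use of Filippov $\tau$-solutions (Proposition~\ref{esistenza}); the closed-graph property of $\partial^-\Psi$ does the same job. Your $T_r\gtrsim(r_0-r)^2$ via Cauchy--Schwarz is valid but weaker than the paper's linear bound $T_r\ge (r_0-r)/(2C_\phi M_\delta)$, obtained from the pointwise step estimate $|x_k^\tau-x_{k-1}^\tau|\le C_\phi M_\delta\,\tau$ in Proposition~\ref{moltoimp}.
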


In \eqref{diffinc_prova}, $\partial^-(\frac{\phi^2}{2})^*$ denotes the subdifferential of the polar function $(\frac{\phi^2}{2})^*$ defined by
\begin{equation*} 
\left(\frac{\phi^2}{2}\right)^*(\xi):= \max_{\eta\in\R^2} <\xi,\eta> - \frac{\phi^2}{2}(\eta) = 2(\phi^2)^*(\xi).
\end{equation*} 
To ease the notations, we set
\begin{equation}\label{Phi}
\Psi(\cdot):=\left(\frac{\phi^2}{2}\right)^{*}(\cdot).
\end{equation}

The proof of Theorem \ref{prova} will be a consequence of Theorem \ref{MM1} and Theorem \ref{thmtautozero}, where the thresholds $\delta_r$, $k_r^\tau$ and $T_r$ are explicitly defined. In the former we prove that a solution of the discrete gradient flow of the energy $\F_{\ep,\Lambda}$ converges as $\ep\to 0$ to a solution of the {\it discrete gradient flow of  the renormalised energy $W_{\Lambda,{d_0}}$}, which is a generalised implicit Euler scheme for $W_{\Lambda,{d_0}}$ (see Definition \ref{defeuimp}); in the latter we show that such a solution tends as $\tau\to 0$ to a solution of  problem \eqref{diffinc_prova}. 

Actually,  it turns out that the final time step $k_r^\tau$ in Theorem \ref{prova} depends on the measures $\mu_k^\tau$.
As a consequence, although  in the statement of Theorem \ref{prova}, we first send $\ep\to 0$ and afterwards $\tau\to 0$,  it is convenient first to analyse the discrete in time gradient flow of the renormalised energy and its limit as $\tau\to 0$, and, then, the limit as $\ep\to 0$ of the discrete gradient flow of $\F_{\ep,\Lambda}$. 
\subsection{The minimising movement scheme for the renormalised energy}
\vskip1mm\qquad

\noindent Fix  initial conditions  ${d_0}:=(d_{1,0},\ldots,d_{M,0})\in\{-1,+1\}^M$, with
$M\in\N$, and
$x_0 = (x_{1,0},\ldots,x_{M,0})\in\Omega^M
$, with $x_{i,0}\neq x_{j,0}$ for $i\neq j$,
and let $r_0$ be as in \eqref{r_0}.

\begin{definition}\label{defeuimp}
Let $\delta>0$, $K\in\N$, and $\tau>0$.  We say that $\{x^\tau_{k}\}$ with $k=0,1,\ldots,K$,  is a solution of the discrete gradient flow of $W_{\Lambda,\bf d_0}$ from $x_0$ if $x_0^\tau=x_0$ and, for any $k=1,\ldots,K$,  $x_k^\tau\in\Om^M$ satisfies 
\begin{equation}\label{xktau}
\begin{split}
&x^\tau_k\in\mathrm{argmin}\left\{W_{\Lambda,{d_0}}(x)+\sum_{i=1}^M\frac{\phi^2(x_i-x^{\tau}_{i,k-1})}{2\tau} : \ x\in\Omega^M, \right.\\
&\left.\quad\phantom{x^\tau_k\in\mathrm{argmin} W_{\Lambda,{d_0}}(x)+\sum_{i=1}^M\frac{\phi^2(x_i-x^{\tau}_{i,k-1})}{2\tau}}\, \sum_{i=1}^M |x_i-x_{i,k-1}^\tau|\le\delta\right\} .
\end{split} 
\end{equation}
\end{definition}
With a small abuse of notation we will consider also the case $K=+\infty$, which corresponds to discrete gradient flows defined for all $k\in\N$.
\begin{definition}\label{massimale}
We say that a solution of the discrete gradient flow $\{x_k^\tau\}$ of $W_{\Lambda,\bf d_0}$ from $x_0$  is maximal if the minimum problem in \eqref{xktau} does not admit a solution for $k=K+1$ (or if it is defined for all $k\in \N$).
\end{definition}
For any $\rho>0$, we set
\begin{equation*} 
K_\rho := \left\{x\in\Omega^M\,:\ \min\{\di_{i\ne j}(x_i,x_j),\di(x_i,\partial\Omega)\}\ge{\rho}\right\};
\end{equation*}
By its very definition $W_{\Lambda,\bf d_0}$ is smooth on $K_\rho$ and therefore 
\begin{equation}\label{stimaren}
\max_{x\in K_\rho}|\nabla W_{\Lambda,\bf d_0}(x)| =: M_\rho <+\infty.
\end{equation}
Let $\{x_k^\tau\}$ be a maximal solution of the discrete gradient flow of $W_{\Lambda,\bf d_0}$ from $x_0$, according to Definitions \ref{defeuimp} and \ref{massimale}; for any $ 2 \delta \le r < r_0$, we set
\begin{multline}\label{ktau0}
k_{r}^\tau=k_{r}^\tau(\{x_k^\tau\}):=\inf \{k\in \{1,\ldots, K\}\ :
\\
\ \min\{\di_{i\ne j}(x_{i,k}^{\tau},x_{j,k}^{\tau}),\di(x^{\tau}_{i,k},\partial\Omega)\}\le r\}.
\end{multline}
If $K$ is finite,  since $\sum_{i=1}^M|x_{i,k_{2 \delta}^\tau}^{\tau}-x_{i,k_{2 \delta}^\tau-1}^{\tau}|\le\delta$, 
then  $k_{2 \delta}^\tau < K$; in particular, $k^\tau_r \le k^\tau_{2\delta} < K$. Moreover,  $x_{k}^\tau\in K_\delta$ for any $k=0,1,\ldots,k_{2 \delta}^\tau $.


\begin{proposition}\label{moltoimp}
For $\tau$ small enough and for every $k=1,\ldots,k_{2 \delta}^\tau$, we have that $|x_k^\tau-x_{k-1}^\tau|<\delta$ and 
\begin{equation}\label{soldiscgf}
\frac{x_{i,k}^{\tau}-x_{i,k-1}^{\tau}}{\tau}\in  \partial^- \Psi\left(-\nabla_{x_i} W_{\Lambda,{d_0}}(x_k^\tau)\right)\quad {\text{ for every }}i=1,\ldots,M,
\end{equation}
where $\Psi$ is defined in \eqref{Phi}.
\end{proposition}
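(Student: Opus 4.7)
The plan is to verify that the constraint in \eqref{xktau} is inactive for $\tau$ small and then read off the Euler--Lagrange condition, finally rewriting it via Fenchel duality. Throughout, the crucial fact we exploit is that, by the remarks following \eqref{ktau0}, both $x_{k-1}^\tau$ and $x_k^\tau$ belong to $K_\delta$ for every $k\le k_{2\delta}^\tau$, so $W_{\Lambda,d_0}$ is of class $C^1$ at both points with gradient bounded by $M_\delta$ (see \eqref{stimaren}).

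First I would test the minimality of $x_k^\tau$ against the competitor $x=x_{k-1}^\tau$ in \eqref{xktau}, obtaining
\begin{equation*}
\sum_{i=1}^M \phi^2(x_{i,k}^\tau-x_{i,k-1}^\tau)\le 2\tau\bigl[W_{\Lambda,d_0}(x_{k-1}^\tau)-W_{\Lambda,d_0}(x_k^\tau)\bigr]\le 2\tau M_\delta\,|x_k^\tau-x_{k-1}^\tau|_{\ell^2},
\end{equation*}
where $|\cdot|_{\ell^2}$ denotes the Euclidean norm on $(\R^2)^M$. Since $\phi$ is a norm on $\R^2$, there exists $c_\phi>0$ with $\phi^2(v)\ge c_\phi|v|^2$, and hence $c_\phi\,|x_k^\tau-x_{k-1}^\tau|_{\ell^2}^2\le 2\tau M_\delta\,|x_k^\tau-x_{k-1}^\tau|_{\ell^2}$. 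Together with $\sum_i|x_{i,k}^\tau-x_{i,k-1}^\tau|\le \sqrt{M}\,|x_k^\tau-x_{k-1}^\tau|_{\ell^2}$, this yields
\begin{equation*}
\sum_{i=1}^M|x_{i,k}^\tau-x_{i,k-1}^\tau|\le \frac{2\sqrt{M}\,M_\delta}{c_\phi}\,\tau,
\end{equation*}
and the right-hand side is strictly smaller than $\delta$ as soon as $\tau<c_\phi\delta/(2\sqrt{M}M_\delta)$. The bound is uniform in $k\le k_{2\delta}^\tau$ because it only depends on $\delta$, so the constraint in \eqref{xktau} is inactive at every such $k$.

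Consequently $x_k^\tau$ is a \emph{local unconstrained} minimiser of $x\mapsto W_{\Lambda,d_0}(x)+\sum_i\phi^2(x_i-x_{i,k-1}^\tau)/(2\tau)$. The map $y\mapsto\phi^2(y)/2$ is convex on $\R^2$ (being the square of a norm) and $W_{\Lambda,d_0}$ is $C^1$ at $x_k^\tau$, so the first-order optimality condition reads, componentwise,
\begin{equation*}
-\tau\,\nabla_{x_i}W_{\Lambda,d_0}(x_k^\tau)\in \partial^{-}\!\left(\tfrac{\phi^2}{2}\right)(x_{i,k}^\tau-x_{i,k-1}^\tau),\qquad i=1,\dots,M.
\end{equation*}
By the Legendre--Fenchel duality $\xi\in\partial^-g(v)\Leftrightarrow v\in\partial^-g^*(\xi)$ applied to $g=\phi^2/2$ (so that $g^*=\Psi$ by \eqref{Phi}), this is equivalent to $x_{i,k}^\tau-x_{i,k-1}^\tau\in\partial^-\Psi(-\tau\,\nabla_{x_i}W_{\Lambda,d_0}(x_k^\tau))$. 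Since $\phi^2/2$ is positively $2$-homogeneous, $\Psi$ is positively $2$-homogeneous and therefore $\partial^-\Psi$ is positively $1$-homogeneous; factoring out $\tau>0$ and dividing by $\tau$ produces exactly \eqref{soldiscgf}.

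The main technical subtlety is the first step: one needs a bound on $|x_k^\tau-x_{k-1}^\tau|$ that is uniform in $k\le k_{2\delta}^\tau$ in order to conclude, via a single smallness condition on $\tau$, that the flat constraint never becomes active. This uniformity is precisely what the inclusion $x_{k-1}^\tau\in K_\delta$ (and the resulting gradient bound $M_\delta$) buys us; the remaining Fenchel-duality and homogeneity manipulations are standard.
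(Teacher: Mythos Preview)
Your proof is correct and follows essentially the same route as the paper's: show the $\delta$-constraint is inactive for small $\tau$, write the first-order optimality condition for the unconstrained (smooth $+$ convex) problem, and invert it via Fenchel duality together with the $2$-homogeneity of $\phi^2/2$ and $\Psi$. The only cosmetic difference is in the first step, where the paper bounds $W_{\Lambda,d_0}(x_{k-1}^\tau)-W_{\Lambda,d_0}(x_k^\tau)$ by the $k$-independent constant $\max_{x\in K_\delta}\bigl(W_{\Lambda,d_0}(x_0)-W_{\Lambda,d_0}(x)\bigr)$ (using energy monotonicity) rather than by $M_\delta\,|x_k^\tau-x_{k-1}^\tau|_{\ell^2}$ via the mean-value inequality; your variant is fine since $x_{k-1}^\tau\in K_{2\delta}$ and the constraint $\sum_i|x_{i,k}^\tau-x_{i,k-1}^\tau|\le\delta$ keep the whole segment in $K_\delta$.
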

\begin{proof}
Since the energy $W_{\Lambda,{d_0}}$ is clearly decreasing in $k$, for every $k=1,\ldots, k_{2\delta}^\tau$ we have
\begin{multline*}
\sum_{i=1}^M\frac{\phi^2(x_{i,k}^{\tau}-x_{i,k-1}^{\tau})}{2\tau}\leq W_{\Lambda,{d_0}}(x_{k-1}^\tau)-W_{\Lambda,{d_0}}(x_k^\tau)\\
\leq W_{\Lambda,{d_0}}(x_0)-W_{\Lambda,{d_0}}(x_k^\tau)\le \max_{x\in K_\delta} (W_{\Lambda,{d_0}}(x_0)-W_{\Lambda,{d_0}}(x)).
\end{multline*}
It follows that for $\tau$ small enough $\sum_{i=1}^M |x_{i,k}^\tau-x_{i,k-1}^\tau|<\delta$.

We notice that the function $\phi^2$ is convex and that $W_{\Lambda,{d_0}}\in C^1(K_\delta)$.
Then by the minimality property of $x_k^\tau$, and by the fact that it belongs to the interior of the constraint  $\{ x: \sum_{i=1}^M |x_{i}-x_{i,k-1}^\tau| \le \delta\}$, we have that for $i=1,\ldots,M$
$$
0\in\nabla_{x_i} W_{\Lambda,{d_0}}(x)|_{x_k^\tau} + \partial^-\left(\frac{\phi^2(x-x_{i,k-1}^{\tau})}{2\tau}\right)_{\big|x_{i,k}^{\tau}}
$$
or equivalently, using the homogeneity of $\phi$,
\begin{equation}\label{LE}
- \nabla_{x_i} W_{\Lambda,{d_0}}(x_k^\tau)\in \partial^-\phi^2\left(\frac{x_{i,k}^{\tau}-x_{i,k-1}^{\tau}}{2\tau}\right).
\end{equation}
Then, by applying the following standard result  (see e.g. \cite[Corollary 5.2]{ET})
\begin{equation}\label{standard}
\xi\in\partial^-\frac{\phi^2}{2}(z)\qquad \Longleftrightarrow \qquad z\in\partial^-\left(\frac{\phi^2}{2}\right)^*(\xi)\qquad\textrm{for any }\xi,z\in\R^2,
\end{equation}
with $\xi=- \nabla_{x_i} W_{\Lambda,{d_0}}(x_k^\tau)$ and $z=\frac{x_{i,k}^{\tau}-x_{i,k-1}^{\tau}}{2\tau}$,
 \eqref{soldiscgf} follows  by \eqref{LE}. 
\end{proof}
In order to study the limit of \eqref{soldiscgf} as $\tau\to 0$, we  recall some classical results for differential inclusions  in \cite{Fil}.
Let $N\in\N$ and let $A$ be an open subset of $\R^N$. 
Let $F$ be  an upper semicontinuous set-valued function  on  $A$ such that for any $x\in A$ the set $F(x)$ is non-empty, bounded and closed.
\begin{definition}\label{tausol}
We say that an absolutely continuous map $x:[0,T]\to A$ is a solution of the differential inclusion
\begin{equation}\label{DifferInc}
\dot{x}(t)\in F(x(t)),
\end{equation}
if it satisfies the inclusion above for almost every $t\in [0,T]$.

Given $\tau>0$, we say that an absolutely continuous map $y:[0,T]\to A$ is a $\tau$-solution (an approximate solution with accuracy $\tau$) of the differential  inclusion \eqref{DifferInc} if 
$$
\dot{y}(t)\in \bigcup_{s\in [t-\tau, t+\tau] } \{ \xi\in \R^N : \di(\xi, \co (F(y(s))) \le \tau\}
$$
{ for almost every $t\in [0,T]$.}
\end{definition}
\begin{proposition}\label{esistenza}
Let $x{ :[0,T]\to A}$ be { the  limit as $\tau\to 0$ of a uniform convergent sequence of $\tau$-solutions $x^\tau:[0,T]\to A$} of the { differential} inclusion \eqref{DifferInc}. Then, $x(t)$ solves \eqref{DifferInc} with $F(x(t))$ replaced by $co(F(x(t)))$, i.e., for a.e. $t\in[0,T]$ 
\begin{equation*} 
\dot{x}(t)\in \co(F(x(t))).
\end{equation*}
\end{proposition}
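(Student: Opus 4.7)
The plan is to combine the upper semicontinuity of $F$ with weak-$*$ compactness of the derivatives $\dot x^\tau$. First I would observe that, since $x^\tau\to x$ uniformly on $[0,T]$, the image $K:=\overline{\bigcup_\tau x^\tau([0,T])}$ is a compact subset of $A$. As $F$ is upper semicontinuous with bounded values, it is locally bounded on $K$, so there exists $R>0$ with $\co F(y)\subset B_R(0)$ for every $y\in K$. The definition of $\tau$-solution (Definition \ref{tausol}) then forces $\|\dot x^\tau\|_{L^\infty}\le R+\tau$, hence the $x^\tau$ are uniformly Lipschitz, and, up to a subsequence, $\dot x^\tau\weakstar\dot x$ in $L^\infty([0,T];\R^N)$ with $x$ Lipschitz.

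The main step is a tube estimate at Lebesgue points of $\dot x$. Fix such a $t$ and $\epsilon>0$. Since $F$ is locally bounded with closed values, the multifunction $\co F$ inherits upper semicontinuity from $F$ on $K$ (via Carath\'eodory's theorem applied on a compact neighborhood of $x(t)$), and hence there exists $\delta>0$ with
\begin{equation*}
\co F(y)\subset \co F(x(t))+B_\epsilon(0)\qquad\forall\, y\in B_\delta(x(t))\cap K.
\end{equation*}
By uniform convergence of $x^\tau$ and continuity of $x$ at $t$, for all sufficiently small $\tau$ and $h>0$ every $s\in[t,t+h]$ and every $\sigma\in[s-\tau,s+\tau]$ satisfy $x^\tau(\sigma)\in B_\delta(x(t))$. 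The $\tau$-solution property then yields $\dot x^\tau(s)\in\co F(x(t))+B_{\epsilon+\tau}(0)$ for a.e.\ $s\in[t,t+h]$, and since the right-hand side is convex,
\begin{equation*}
\frac{x^\tau(t+h)-x^\tau(t)}{h}\in \co F(x(t))+B_{\epsilon+\tau}(0).
\end{equation*}

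Finally, letting $\tau\to 0$ (using uniform convergence $x^\tau\to x$) and then $h\to 0$ (using that $t$ is a Lebesgue point of $\dot x$) gives $\dot x(t)\in\overline{\co F(x(t))+B_\epsilon(0)}$, and sending $\epsilon\to 0$ yields $\dot x(t)\in\co F(x(t))$, since $\co F(x(t))$ is closed as the convex hull of a compact subset of $\R^N$. The main subtle point is the passage from upper semicontinuity of $F$ to that of $\co F$, which is classical in finite dimensions under local boundedness but must be argued rather than merely invoked; the remainder is routine averaging combined with weak-$*$ convergence.
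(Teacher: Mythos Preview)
The paper does not prove Proposition~\ref{esistenza}: it is stated as one of the ``classical results for differential inclusions'' recalled from Filippov's book \cite{Fil}, and is used as a black box in the proof of Theorem~\ref{thmtautozero}. There is therefore no proof in the paper to compare against.

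Your argument is correct and is essentially the standard one. A couple of minor remarks. First, the weak-$*$ compactness of $\dot x^\tau$ that you extract is never actually used: your main step works directly with difference quotients $(x^\tau(t+h)-x^\tau(t))/h$ and only needs the uniform convergence $x^\tau\to x$ together with the uniform Lipschitz bound (which already forces $x$ to be Lipschitz without passing to a subsequence). Second, to ensure that the compact set $K=\overline{\bigcup_\tau x^\tau([0,T])}$ lies in the open set $A$, you implicitly use that $x([0,T])$ is a compact subset of $A$ and that $x^\tau([0,T])$ is eventually contained in any fixed neighborhood of it; this is fine but worth making explicit. The passage from upper semicontinuity of $F$ to that of $\co F$, which you flag as the subtle point, is indeed the standard finite-dimensional fact (Carath\'eodory plus compactness of values), and your handling of it is adequate.
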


Now for $0\le t\le k_{2 \delta}^\tau\tau$, we denote by $x^\tau(t)=(x_1^{\tau}(t),\ldots, x_M^{\tau}(t))$ the piecewise affine interpolation in time of the discrete gradient flow $\{x_k^\tau\}$ defined in Definitions~\ref{defeuimp} and~\ref{massimale}.
The next theorem clarifies the limit problem solved by $\{x^\tau\}$ as $\tau\to 0$.
\begin{theorem}\label{thmtautozero}
Let $0<r<r_0$ and let   $0<\delta< \frac  r 2$,
where $r_0$ is defined in \eqref{r_0}.
Let $\{x_k^\tau\}$ be a family of maximal solutions of the discrete gradient flow of $W_{\Lambda, \bf d_0}$ from $x_0$. Then, 
$$
 T_{r}:=\liminf_{\tau\to 0} k_{r}^\tau \tau >0.
$$ 
Moreover,  up to a subsequence $x^\tau\to x$ locally uniformly in $[0, T_{r})$, where  $x:[0, T_{r})\to\Omega^M$ is a Lipschitz continuous map that solves 
\begin{equation}\label{diffinc}
\left\{\begin{array}{l}
\!\!\!\dot{x}_i(t)\!\in \!\partial^{-}\Psi\left(-\nabla_{x_i} \!W_{\Lambda,{d_0}}(x(t))\right)\,\text{for }i=1,\ldots,M,\,\textrm{for a.e. } t\in[0, T_{r})\\
\!\!\!x(0)=x_0,
\end{array}\right.
\end{equation}
and satisfies
 \begin{equation}\label{mindistlim}
 \inf\{\min\{\di_{i\neq j}(x_{i}( t),x_{j}( t)),\di(x_{i}( t),\partial\Omega)\}: t<T_r\}
\geq r.
\end{equation}
If $T_r<+\infty$, then {
\begin{equation}\label{mindistlim2}
\min\{\di_{i\neq j}(x_{i}( T_{r}),x_{j}( T_{r})),\di(x_{i}( T_{r}),\partial\Omega)\}=r.
\end{equation}}
\end{theorem}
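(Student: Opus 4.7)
The plan is to derive a uniform Lipschitz estimate for the piecewise-affine interpolants $x^\tau$ valid up to the exit time $k_r^\tau\tau$, extract a subsequential limit by Arzel\`a--Ascoli, and then pass to the limit in the discrete Euler--Lagrange inclusion \eqref{soldiscgf} via Proposition~\ref{esistenza}.

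\textbf{Uniform speed bound and positivity of $T_r$.} For every $k\le k_r^\tau$, $x_{k-1}^\tau$ lies in $K_r$ by the definition of $k_r^\tau$, and Proposition~\ref{moltoimp} gives $\sum_i|x_{i,k}^\tau-x_{i,k-1}^\tau|<\delta$; since $\delta<r/2$, both $x_{k-1}^\tau$ and $x_k^\tau$ stay in $K_{r-2\delta}$, where $|\nabla W_{\Lambda,d_0}|\le M_{r-2\delta}$ by \eqref{stimaren}. Because $\Psi=(\phi^2/2)^*$ is a finite convex function on $\R^2$ with quadratic growth, its subdifferential $\partial^-\Psi$ sends bounded sets into bounded sets; combining this with \eqref{soldiscgf} yields
\[
\Bigl|\tfrac{x_k^\tau-x_{k-1}^\tau}{\tau}\Bigr|\le L_r\qquad\text{for every } k\le k_r^\tau,
\]
with $L_r$ independent of $\tau$. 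Since $x_0\in K_{r_0}$ and the trajectory moves with speed at most $L_r$, it cannot exit $K_r$ before time $(r_0-r)/L_r$, whence $k_r^\tau\tau\ge (r_0-r)/L_r$ and $T_r>0$.

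\textbf{Compactness and identification of the limit.} The uniform speed bound makes $\{x^\tau\}$ equicontinuous and equibounded on every compact $[0,T_r-\eta]\subset[0,T_r)$, so Arzel\`a--Ascoli yields a subsequence converging locally uniformly on $[0,T_r)$ to a Lipschitz map $x\colon[0,T_r)\to\Omega^M$. The inequality $\min\{\di_{i\neq j}(x_{i,k}^\tau,x_{j,k}^\tau),\di(x_{i,k}^\tau,\partial\Omega)\}> r$ valid for $k<k_r^\tau$ passes to the limit and delivers \eqref{mindistlim}. To identify the limiting dynamics, I check that $x^\tau$ is a $\tau$-solution, in the sense of Definition~\ref{tausol}, of
\[
\dot y_i(t)\in F_i(y(t)):=\partial^-\Psi\bigl(-\nabla_{x_i}W_{\Lambda,d_0}(y(t))\bigr).
\]
Indeed, on each sub-interval $((k-1)\tau,k\tau)$ the affine interpolant has derivative $(x_k^\tau-x_{k-1}^\tau)/\tau$, which lies in $F_i(x_k^\tau)=F_i(x^\tau(k\tau))$ by \eqref{soldiscgf}; choosing $s=k\tau$ in Definition~\ref{tausol} is admissible since $|s-t|\le\tau$, and the distance from $\dot x^\tau(t)$ to $\co F_i(x^\tau(s))$ is in fact zero. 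The set-valued map $F_i$ is upper semicontinuous on $K_r$ with non-empty, compact, convex values (being the subdifferential of a finite convex function on $\R^2$ composed with a continuous gradient), so Proposition~\ref{esistenza} applies; since $\co F_i=F_i$, the limit $x$ solves \eqref{diffinc}, with the initial condition inherited from $x^\tau(0)=x_0$.

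\textbf{Behaviour at $T_r$.} Suppose $T_r<+\infty$ and fix a subsequence $\tau_n$ with $k_r^{\tau_n}\tau_n\to T_r$. Combining the uniform Lipschitz bound for $x^{\tau_n}$, the Lipschitz bound for $x$, and the locally uniform convergence $x^{\tau_n}\to x$ on $[0,T_r-\eta]$ with $\eta\to 0$ yields $x_{k_r^{\tau_n}}^{\tau_n}=x^{\tau_n}(k_r^{\tau_n}\tau_n)\to x(T_r)$. By definition of $k_r^{\tau_n}$ the quantity $\min\{\di_{i\neq j}(x_{i,k_r^{\tau_n}}^{\tau_n},x_{j,k_r^{\tau_n}}^{\tau_n}),\di(x_{i,k_r^{\tau_n}}^{\tau_n},\partial\Omega)\}$ is $\le r$, while \eqref{mindistlim} together with the continuity of $x$ forces the reverse inequality in the limit, giving \eqref{mindistlim2}. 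The step I expect to require the most care is reconciling the discrete Euler--Lagrange identity \eqref{soldiscgf}, which is written at the grid point $x_k^\tau$, with the sliding reference point $x^\tau(t)$ required by Definition~\ref{tausol}; this is precisely what the flexibility built into the $\tau$-solution framework (the union over $s\in[t-\tau,t+\tau]$) is designed to absorb, but one must also confirm upper semicontinuity of $F_i$ on $K_r$, which rests on standard properties of subdifferentials of finite convex functions.
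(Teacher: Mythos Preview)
Your proof is correct and follows essentially the same route as the paper: a uniform speed bound from \eqref{soldiscgf} and \eqref{stimaren}, Arzel\`a--Ascoli compactness, identification of $x^\tau$ as a $\tau$-solution, and an appeal to Proposition~\ref{esistenza}. The only imprecision is the constant in ``cannot exit $K_r$ before time $(r_0-r)/L_r$'': since a pairwise distance $|x_i-x_j|$ can decrease at rate up to $2L_r$, the correct lower bound is $(r_0-r)/(2L_r)$ (which is exactly the bound the paper obtains), but this does not affect the conclusion $T_r>0$.
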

\begin{proof}
By the very definition of $k_{r}^\tau$, it is easy to prove that, whenever it is finite, then  
\begin{equation}\label{rb}
|x^\tau_{k_r^\tau}-x_0^\tau|\ge\max_{i=1,\ldots,M} |x_{i,k_r^{\tau}}^{\tau}-x_{i,0}^{\tau}|>\frac{r_0-r}{2}\,.
\end{equation}
Since $2 \delta<r$, $k_r^\tau\le k_{2 \delta}^\tau$; hence, by \eqref{soldiscgf}, for every $k=1,\ldots,k_r^\tau$
$$
 | x_{k}^{\tau}-x_{k-1}^{\tau}| \le C_\phi \max_{x\in K_\delta}|\nabla W_{\Lambda,{d_0}}(x)| \tau = C_\phi M_\delta \tau,
$$
where $C_\phi $ depends only on $\phi$. Therefore,
$$
|x_{k_r^\tau}^\tau-x_{0}^\tau|\le
\sum_{k=1}^{k_{r}^{\tau}}|x_{k}^{\tau}-x_{k-1}^{\tau}|
\le C_{\phi} M_\delta k_{r}^{\tau}\tau,
$$
and hence, by \eqref{rb},
$$
k_{r}^\tau\tau\ge\frac{r_0-r}{2 \dd  C_{\phi}M_\delta}>0.
$$
We deduce that $T_r>0$. Clearly  $x^\tau$ are equi-bounded and Lipschitz equi-continuous in $[0,k_r^\tau\tau]$. By
 Ascoli-Arzel\`a Theorem and by a standard diagonal argument,  up to a subsequence, $x^\tau$ converges locally uniformly on $[0, T_r)$  to a Lipschitz continuous  function $x: [0, T_r]\to\Omega^M$. 
By \eqref{soldiscgf}, for every $T< T_r$ and for $\tau$ small enough, $x^\tau$  satisfy
$$
\left\{\begin{array}{l}
\dot{x}_i^\tau(t)\in  \partial^{-}\Psi(-\nabla_{x_i} W_{\Lambda,{d_0}}(x^\tau_{\lfloor t/\tau\rfloor})) \qquad \text{for }i=1,\ldots,M, \textrm{ for a.e. } t\in[0,T],\\
x(0)=x_0.
\end{array}\right.
$$
Therefore, $x^\tau$ is  a $\tau$-solution to \eqref{diffinc} in $[0,T]$, according with Definition \ref{tausol}.
Moreover, by the very definition of the subdifferential, $\partial^-\Psi(\xi)$ is closed and convex and, since $\Psi$ is convex, it is non-empty, bounded and upper-semicontinuous for every $x\in K_\delta$. 

Therefore, the set-valued function $\partial^-\Psi(-\nabla_{x_i} W_{\Lambda,{d_0}}(x))$ satisfies the assumptions of Proposition \ref{esistenza}, and hence $x(t)$ is a solution to \eqref{diffinc}.
Finally, \eqref{mindistlim} { and \eqref{mindistlim2}} follow by the very definition of $k_r^\tau$.
\end{proof}
\subsection{Asymptotic discrete gradient flow of $\mathcal F_{\ep,\Lambda}$ as $\e\to 0$}\label{taua0}
\qquad

\noindent We are now in a position to state and prove the convergence of discrete gradient flows $\mathcal F_{\ep,\Lambda}$ as $\ep\to 0$.

For any $0<r<r_0$ with $r_0$ defined as in \eqref{r_0}, we set $\delta_r:=\min\{ \frac{r}{2},  \frac{c_0}{M_{\frac{r}{2}}}\}$ with $M_{\frac{r}{2}}$ and $c_0$ defined in \eqref{stimaren} and Lemma \ref{energy-dipole} respectively.

\begin{theorem}\label{MM1}
{ Let $\mu_0\in X(\Om)$ and  fix $0<r<r_0$, $0<\delta<\delta_r$. Let $\{\mu_{\ep,0}\}\subset X_{\ep,\Lambda}(\Om)$ be well prepared initial conditions with respect to $\mu_0$ according with Definition~\ref{wpid}, let $\tau>0$ and let  $\{\mu_{\ep,k}^\tau\}$ be a solution of the discrete gradient flow of $\mathcal{F}_{\ep,\Lambda}$ from $\mu_{\ep,0}$.}
Then, for any $k\in\N$ there exists $\mu_k^\tau\in X(\Omega)$ with $|\mu_k^\tau|(\Om)\le M$, such that, up to subesquences, $\mu_{\ep,k}^\tau\fla\mu_k^\tau$ as $\ep\to 0$. 

Moreover, there exists a maximal solution  $x_k^\tau=(x_{1,k}^\tau,\ldots,x_{M,k}^\tau)$ of the discrete gradient flow of  $W_{\Lambda,{d_0}}$ from $x_0$, according with Definition~\ref{defeuimp}, such that 
$$
\mu_k^\tau=\sum_{i=1}^M d_{i,0}\delta_{x_{i,k}^\tau}\qquad  \hbox{for every $k=1,\ldots, k_r^\tau$,}
$$  
where $k_r^\tau$ is defined in \eqref{ktau0}.
Finally, for any $k=1,\ldots, k_r^\tau$, $\{\mu_{\ep,k}^\tau\}$ is a recovery sequence for $\mu_k^\tau$ in the sense of \eqref{gammalimsup}.
\end{theorem}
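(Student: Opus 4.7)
I would proceed by induction on $k\in\{0,1,\ldots,k_r^\tau\}$; the base case $k=0$ is the well-preparedness assumption. For the inductive step, suppose $\mu_{\ep,k-1}^\tau$ is a recovery sequence for $\mu_{k-1}^\tau=\sum_{i=1}^M d_{i,0}\delta_{x_{i,k-1}^\tau}$ with $x_{k-1}^\tau\in K_r$. Two consequences of the minimality defining $\mu_{\ep,k}^\tau$ underpin the argument. First, testing against the stationary competitor $\mu_{\ep,k-1}^\tau$ (admissible, with zero dissipation) and invoking the inductive recovery property gives
\begin{equation*}
\limsup_{\ep\to 0}\Bigl[\F_{\ep,\Lambda}(\mu_{\ep,k}^\tau)-M\tfrac{\lambda_Q}{2\pi}|\log\ep|+\tfrac{\diss(\mu_{\ep,k}^\tau,\mu_{\ep,k-1}^\tau)}{2\tau}\Bigr]\le W_{\Lambda,d_0}(x_{k-1}^\tau)+M\gamma. \qquad (\mathrm{I})
\end{equation*}
Second, for each $y\in\Om^M$ with $\sum_i|y_i-x_{i,k-1}^\tau|<\delta$ and $y$ strictly in the interior of $K_{r/2}$, a $\Gamma$-$\limsup$ recovery sequence $\hat\mu_\ep$ for $\sum_i d_{i,0}\delta_{y_i}$ is admissible for small $\ep$ by continuity of the flat norm, and Corollary~\ref{contrecov} yields $\diss(\hat\mu_\ep,\mu_{\ep,k-1}^\tau)\to\sum_i\phi^2(y_i-x_{i,k-1}^\tau)$; hence
\begin{equation*}
\limsup_{\ep\to 0}\Bigl[\F_{\ep,\Lambda}(\mu_{\ep,k}^\tau)-M\tfrac{\lambda_Q}{2\pi}|\log\ep|+\tfrac{\diss(\mu_{\ep,k}^\tau,\mu_{\ep,k-1}^\tau)}{2\tau}\Bigr]\le W_{\Lambda,d_0}(y)+M\gamma+\sum_i\tfrac{\phi^2(y_i-x_{i,k-1}^\tau)}{2\tau}. \qquad (\mathrm{II})
\end{equation*}
Since $\diss\ge 0$, (I) gives $\F_{\ep,\Lambda}(\mu_{\ep,k}^\tau)\le M\tfrac{\lambda_Q}{2\pi}|\log\ep|+C$, so by the compactness axiom $\mu_{\ep,k}^\tau\fla\mu_k^\tau$ along a subsequence with $|\mu_k^\tau|(\Om)\le M$.

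Next I would identify the structure of $\mu_k^\tau$; here the hypothesis $\delta<r/2$ enters. The $M$ atoms of $\mu_{\ep,k-1}^\tau$ cluster near the points $x_{i,k-1}^\tau$, mutually and boundary-separated by at least $r$, while $\|\mu_{\ep,k}^\tau-\mu_{\ep,k-1}^\tau\|_\flt\le\delta<r/2$. Any cancellation of one of these atoms against another of opposite sign or against the boundary would require a transport cost of at least $r-O(\delta)>\delta$ in flat norm, which is forbidden. Hence each approximating dislocation in $\supp\mu_{\ep,k-1}^\tau$ possesses a same-sign descendant in $\mu_{\ep,k}^\tau$ within distance $O(\delta)$; in the limit these descendants remain pairwise separated by $r-2\delta>0$, yielding $\mu_k^\tau=\sum_i d_{i,0}\delta_{\tilde x_i}$ with $\sum_i|\tilde x_i-x_{i,k-1}^\tau|\le\delta$ and $\tilde x\in K_{r-2\delta}$. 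Combined with $|\mu_k^\tau|(\Om)\le M$ this forces $|\mu_k^\tau|(\Om)=M$.

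The main obstacle is to rule out dipolar clusters that carry positive energy yet are invisible to flat convergence, and this is where the additional smallness $\delta<c_0/M_{r/2}$ enters. With $|\mu_k^\tau|(\Om)=M$ established, Proposition~\ref{corimp} yields
\[\liminf_{\ep\to 0}\Bigl[\F_{\ep,\Lambda}(\mu_{\ep,k}^\tau)-M\tfrac{\lambda_Q}{2\pi}|\log\ep|\Bigr]\ge W_{\Lambda,d_0}(\tilde x)+M\gamma+c_0\nu(A)\]
for the cluster measure $\nu$ on $A=\R^2\setminus\supp\mu_k^\tau$. Using $\diss\ge 0$, (I) implies $\limsup_\ep[\F_{\ep,\Lambda}(\mu_{\ep,k}^\tau)-M\tfrac{\lambda_Q}{2\pi}|\log\ep|]\le W_{\Lambda,d_0}(x_{k-1}^\tau)+M\gamma$; chaining with the $\liminf$ lower bound and invoking the $C^1$ estimate $|\nabla W_{\Lambda,d_0}|\le M_{r/2}$ on $K_{r/2}$ together with $\sum_i|\tilde x_i-x_{i,k-1}^\tau|\le\delta$, one obtains $c_0\nu(A)\le W_{\Lambda,d_0}(x_{k-1}^\tau)-W_{\Lambda,d_0}(\tilde x)\le M_{r/2}\,\delta<c_0$ by the choice $\delta<\delta_r$. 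Since $\nu(A)$ is a nonnegative integer, $\nu=0$. In particular $\supp\mu_{\ep,k}^\tau\to\supp\mu_k^\tau$ in the Hausdorff sense, and Lemma~\ref{lemmaimp} upgrades the dissipation to the exact limit $\diss(\mu_{\ep,k}^\tau,\mu_{\ep,k-1}^\tau)\to\sum_i\phi^2(\tilde x_i-x_{i,k-1}^\tau)$.

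Finally, specialise (II) to $y=x_k^\tau$, a minimiser of the discrete gradient flow for $W_{\Lambda,d_0}$ from $x_{k-1}^\tau$; admissibility follows from Proposition~\ref{moltoimp}, which ensures the strict inequality $\sum_i|x_{i,k}^\tau-x_{i,k-1}^\tau|<\delta$ for small $\tau$. Combining with the sharp $\liminf$ (now with $\nu=0$) and the just-proven dissipation convergence yields
\[W_{\Lambda,d_0}(\tilde x)+\sum_i\tfrac{\phi^2(\tilde x_i-x_{i,k-1}^\tau)}{2\tau}\le W_{\Lambda,d_0}(x_k^\tau)+\sum_i\tfrac{\phi^2(x_{i,k}^\tau-x_{i,k-1}^\tau)}{2\tau},\]
so $\tilde x$ is itself a minimiser of the discrete gradient flow problem; declaring $x_k^\tau:=\tilde x$ extends the maximal solution to step $k$. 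The chain of inequalities then collapses to equalities, in particular $\F_{\ep,\Lambda}(\mu_{\ep,k}^\tau)-M\tfrac{\lambda_Q}{2\pi}|\log\ep|\to W_{\Lambda,d_0}(\tilde x)+M\gamma$, so $\{\mu_{\ep,k}^\tau\}$ is a recovery sequence for $\mu_k^\tau$, closing the induction.
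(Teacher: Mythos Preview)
Your proof is correct and follows essentially the same inductive route as the paper's: energy monotonicity gives compactness, the flat-norm constraint $\delta<r/2$ together with $|\mu_k^\tau|(\Om)\le M$ forces $\mu_k^\tau=\sum_i d_{i,0}\delta_{\tilde x_i}$, Proposition~\ref{corimp} combined with $\delta<c_0/M_{r/2}$ kills the dipole measure $\nu$, and Lemma~\ref{lemmaimp}/Corollary~\ref{contrecov} deliver the dissipation limit. The one superfluous step is your appeal to Proposition~\ref{moltoimp}, which carries a ``$\tau$ small enough'' hypothesis not present in Theorem~\ref{MM1}; since your inequality~(II) already holds for \emph{every} $y$ with $\sum_i|y_i-x_{i,k-1}^\tau|<\delta$, comparing against arbitrary such $y$ (as the paper does) shows directly that $\tilde x$ is a minimizer and collapses the chain of inequalities without this detour.
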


\begin{proof}
Since $\mathcal{F}_{\ep,\Lambda}(\mu_{\ep,k}^\tau)$ is not increasing in $k$, we have
\begin{equation}\label{enerbound}
\mathcal{F}_{\ep,\Lambda}(\mu_{\ep,k}^\tau)\le\mathcal{F}_{\ep,\Lambda}(\mu_{\ep,0})\le M\,\lambda_{Q}|\log\ep|+C.
\end{equation}
By the compactness property stated in the $\Gamma$-convergence condition (i), we have that, up to a subsequence, $\mu_{\ep,k}^{\tau}\fla\mu_{k}^{\tau}\in X(\Omega)$,  
with $|\mu_k^\tau|(\Om)\le M$. 

Let now $0<r< {r_0} $  and let  $0<\delta<\delta_r$.  We set
\begin{multline*} 
\tilde k_r^\tau:=\sup\{k\in\N :\ \mu_l^\tau=\sum_{i=1}^M d_{i,0}\delta_{x_{i,l}^\tau},\\
\min\{\di_{i\ne j}(x_{i,l}^\tau,x_{j,l}^\tau),\di(x_{i,l}^\tau,\partial\Omega)\}> r,\ l=0,\ldots,k\}.
\end{multline*}

Since $|\mu_{\tilde k_r^\tau+1}^\tau|(\Om)\le M$ and $\|\mu_{\tilde k_r^\tau+1}^\tau-\mu_{\tilde k_r^\tau}^\tau\|_\flt\le\delta < \frac r 2$, we deduce that, whenever $\tilde k_r^\tau$ is finite,   then
$\mu^\tau_{\tilde k_r^\tau + 1}=\sum_{i=1}^M d_{i,0}\delta_{x_{i, \tilde k_r^\tau + 1}^\tau}$, with
\begin{equation}\label{vincoloviolato}
\frac{r}{2} \le r-\delta\le\min\{ \di_{i\ne j}( x_{i,\tilde k_r^\tau+1}^\tau,x_{j,\tilde k_r^\tau+1}^\tau),\di(x_{i,\tilde k_r^\tau+1}^\tau,\partial\Omega)\}\le  r.
\end{equation}

By induction on $k$, we prove that for any $k=0,1,\ldots,\tilde k_r^\tau+1$, $\{\mu_{\ep,k}^\tau\}$ is a recovery sequence for $\mu_{k}^\tau$ in the sense of \eqref{gammalimsup}. Indeed, by assumption, the claim is satisfied for $k=0$. Assuming that the claim holds true for $k-1$, we prove it for $k$. 
Indeed by \eqref{enerbound} and the definition of $\tilde k_r^\tau$, in view of Proposition \ref{corimp}, there exists a positive atomic measure $\nu$ with integer weights such that
\begin{equation}
|\mu_{\ep,k}^\tau|\res({\R^2\setminus\supp\mu_k^\tau})\weakstar \nu\quad\mbox{in } \R^2\setminus\supp\mu_k^\tau,
\end{equation}
and
\begin{equation}\label{equ1}
\liminf_{\ep\to 0}\F_{\ep,\Lambda}(\mu_{\ep,k}^\tau)-M\frac{\lambda_{Q}}{2\pi}|\log\ep|\ge \W_{Q}(\mu_k^\tau)+M\gamma +c_0\,\nu(\R^2\setminus\supp\mu_k^\tau),
\end{equation}
where $c_0$ is given by Lemma~\ref{energy-dipole}.
Using that $\{\mu_{\ep,k}^\tau\}$ satisfies \eqref{MMeps} and the inductive assumption, we get
\begin{multline}\label{equ2}
\limsup_{\ep\to 0}\F_{\ep,\Lambda}(\mu_{\ep,k}^\tau)-M\frac{\lambda_{Q}}{2\pi}|\log\ep|\\
\le  \limsup_{\ep\to 0}\F_{\ep,\Lambda}(\mu_{\ep,k-1}^\tau)-M\frac{\lambda_{Q}}{2\pi}|\log\ep|
=\W_{Q}(\mu^\tau_{k-1})+M\,\gamma.
\end{multline}
By \eqref{equ1}, \eqref{equ2} and \eqref{vincoloviolato} we have
$$
c_0\,\nu(\R^2\setminus\supp\mu_k^\tau)\le\W_Q(\mu_{k-1}^\tau)-\W_Q(\mu_k^\tau)=W_{\Lambda,{d_0}}(x_{k-1}^\tau)-W_{\Lambda,{d_0}}(x_{k}^\tau) \le M_{\frac{r}{2}} \delta,
$$
where $M_{\frac{r}{2}}$ is defined in \eqref{stimaren}. 
By assumption $M_{\frac{r}{2}} \delta <c_0$, whence $\nu=0$.

Let now $\{\tilde \mu_{\ep,k}^\tau\}$ be a recovery sequence for $\mu_k^\tau$;
since  $\|\mu_k^{\tau}-\mu_{k-1}^\tau\|_{\flt}\le\delta$, by  standard density arguments  we can assume that also  $\|\tilde\mu_{\ep,k}^\tau-\mu_{\ep,k-1}^\tau\|_{\flt}\le\delta$.
Then, by \eqref{equ1} and using again that $\{\mu_{\ep,k}^\tau\}$ satisfies \eqref{MMeps}, we obtain
\begin{multline*} 
\W_Q(\mu_k^\tau)+M\,\gamma\le \liminf_{\ep\to 0}\F_{\ep,\Lambda}(\mu_{\ep,k}^\tau)-M\frac{\lambda_{Q}}{2\pi}|\log\ep|\\
\le
\limsup_{\ep\to 0}\F_{\ep,\Lambda}(\mu_{\ep,k}^\tau)-M\frac{\lambda_{Q}}{2\pi}|\log\ep|\\
\le \limsup_{\ep\to 0}\F_{\ep,\Lambda}(\tilde \mu_{\ep,k}^\tau)-M\frac{\lambda_{Q}}{2\pi}|\log\ep|+
\frac{\diss (\tilde\mu_{\ep,k}^\tau,\mu_{\ep,k-1}^\tau)}{2\tau}-\frac{\diss (\mu_{\ep,k}^\tau,\mu_{\ep,k-1}^\tau)}{2\tau}\\
=\W_Q(\mu_k^\tau)+M\,\gamma\,,
\end{multline*}
where in the last equality we have used that 
$$
\lim_{\ep\to 0}\diss (\tilde\mu_{\ep,k}^\tau,\mu_{\ep,k-1}^\tau)=\lim_{\ep\to 0}\diss (\mu_{\ep,k}^\tau,\mu_{\ep,k-1}^\tau)=\diss(\mu_k^\tau,\mu_{k-1}^\tau),
$$
which holds true since the sequences $\{\mu_{\ep,k-1}^\tau\},\,\{\mu_{\ep,k}^\tau\},\,\{\tilde \mu_{\ep,k}^\tau\},$ satisfy the assumptions of Lemma \ref{lemmaimp} (see also Corollary \ref{contrecov}). 
This concludes the proof that $\{ \mu_{\ep,k}^\tau\}$ is a recovery sequence for $\mu_k^\tau$.

It remains to show that $\{x_k^\tau\}$ is a solution to the discrete gradient flow of $W_{\Lambda,{d_0}}$ from $x_0$ for $k=1,\ldots,\tilde k_r^\tau+1$, and that $k_r^\tau=\tilde k_r^\tau+1$.
Let $x=(x_1,\ldots,x_M)\in\Omega^M$ be such that $\sum_{i=1}^M|x_i-x_{i,k-1}^\tau|\le\delta$. Set $\mu:=\sum_{i=1}^M d_{i,0}\delta_{x_i}$ and let $\{\mu_{\ep}\}$ be a recovery sequence of $\mu$. As above we may assume that $\|\mu_\ep-\mu_{\ep,k-1}^\tau\|_\flt\le\delta$. 

Notice that Corollary \ref{contrecov} implies
\begin{equation}\label{equ3}
\lim_{\ep\to 0}\diss(\mu_{\ep,k}^\tau,\mu_{\ep,k-1}^\tau)=\diss(\mu^\tau_k,\mu_{k-1}^\tau),\quad\lim_{\ep\to 0}\diss(\mu_{\ep},\mu_{\ep,k-1}^\tau)=\diss(\mu,\mu_{k-1}^\tau).
\end{equation}
Therefore, since $\mu_{\ep,k}^\tau$ is a recovery sequence for $\mu_k^\tau$ and by \eqref{MMeps} and \eqref{equ3}, we get
\begin{multline*}
W_{\Lambda,{d_0}}(x_k^\tau)+M\gamma+\sum_{i=1}^M\frac{\phi^2(x_{i,k}^\tau-x_{i,k-1}^\tau)}{2\tau}
= \W_{Q}(\mu_{k}^\tau)+M\gamma+\frac{ \diss(\mu_{k}^\tau,\mu_{k-1}^\tau)}{2\tau}
\\
= \lim_{\ep\to 0}\mathcal{F}_{\ep}(\mu_{\ep,k}^\tau)- M \lambda_Q |\log\ep|+
\frac{\diss(\mu_{\ep,k}^\tau,\mu_{\ep,k-1}^\tau)}{2\tau}
\\
\le\lim_{\ep\to 0}\mathcal{F}_{\ep}(\mu_{\ep})- M \lambda_Q|\log\ep|+\frac{\diss(\mu_{\ep},\mu_{\ep,k-1}^\tau)}{2\tau}\\
=\W_{Q}(\mu)+M\gamma+\frac{\diss(\mu,\mu_{k-1}^\tau)}{2\tau}=W_{\Lambda,{d_0}}(x)+M\gamma+\sum_{i=1}^M\frac{\phi^2(x_{i}-x_{i,k-1}^\tau)}{2\tau},
\end{multline*} 
i.e., $x_k^\tau$ satisfies \eqref{xktau}. 

Finally, by \eqref{vincoloviolato}, we immediately  get that $k_r^\tau=\tilde k_r^\tau+1$. 
\end{proof}

\begin{remark}
\rm{  
We remark that if $\phi$ is the Euclidean norm and the lattice $\Lambda$ is the square lattice $\Z^2$, Theorem \ref{MM1} corrects the statement \cite[Theorem 6.7]{ADGP1}, where the needed assumption that the initial conditions are well prepared (see Definition~\ref{wpid}) was missing. 
The incorrect proof of \cite[Theorem 6.7]{ADGP1} was based on the  claim that the dissipation is always continuous with respect to the flat convergence. Such a claim is in general  wrong, as clarified in Example~\ref{example}.
Nevertheless, in the proof of Theorem \ref{MM1} we have shown that the solutions of the discrete gradient flow  from well prepared initial conditions remain well prepared  at each time step.   This fact, in view of  Lemma~\ref{lemmaimp},  implies the desired continuity property of the dissipation as $\e\to 0$ and allows to conclude the proof of the Theorem \ref{MM1} and, in turn, of \cite[Theorem 6.7]{ADGP1}.

}
\end{remark}

{\bf Acknowledgements:}
RA, AG and MP are members of  the Gruppo Nazionale per l'Analisi Matematica, la Probabilit\`a e le loro Applicazioni (GNAMPA) of the Istituto Nazionale di Alta Matematica (INdAM). The research of LDL is funded by the DFG Collaborative Research Center CRC 109 ``Discretization in Geometry and Dynamics''.

\end{document}